\documentclass{AIMS}
\usepackage{amsmath}
\usepackage{paralist}
\usepackage{graphics} 
\usepackage{epsfig} 
\usepackage{graphicx}  \usepackage{epstopdf}
\usepackage[colorlinks=true]{hyperref}
\usepackage{enumerate}
\hypersetup{urlcolor=blue, citecolor=red}

\textheight=8.2 true in
\textwidth=5.0 true in
\topmargin 30pt
\setcounter{page}{1}


\numberwithin{equation}{section}
\theoremstyle{plain}

\newtheorem{lem}{Lemma}[section]
\newtheorem{condition}{Condition}
\newtheorem{thm}[lem]{Theorem}\newtheorem{assum}[condition]{Assumptions}

{\theoremstyle{definition}
\newtheorem{Def}[lem]{Definition}
\newtheorem{Rem}[lem]{Remark}}

\newenvironment{mlist}
{\begin{list}{ $\circ$}
		{
			\setlength{\leftmargin}{0.3cm}\setlength{\partopsep}{-0.1cm}
			\setlength{\topsep}{0.0cm}
			\setlength{\itemsep}{-0.1cm}
	}}{\end{list}}


\newcommand\dela[1]{}

\newcommand{\h}{\mathrm{H}}
\newcommand{\bH}{\mathbb{H}}
\newcommand{\sH}{\mathbb{H}_{\mathrm{sol}}}

 \usepackage{mathrsfs}
 \usepackage{bbm, dsfont}
 \usepackage{amsmath,amsthm,amsfonts,amssymb, mathabx}
 \usepackage{amsmath, enumerate}
 \usepackage{multirow}

\usepackage{soul}
\newcommand\toup{\nearrow}

\renewcommand{\v}{\bv}

\newcommand{\A}{\mathbf{A}}
\newcommand{\f}{\mathbf{F}}

\newcommand{\bw}{\mathbb{W}}
\newcommand{\y}{\mathbf{y}}
\renewcommand{\d}{\mathbf{d}}
\newcommand{\bh}{\mathbf{h}}
\newcommand{\bk}{\mathbf{k}}

\newcommand{\MO}{\mathscr{O}}

\newcommand{\bv}{\mathbf{v}}

\newcommand{\bd}{\mathbf{d}}

\newcommand{\el}{\mathbb{L}}

\newcommand{\mo}{\mathscr{O}}

\newcommand{\bu}{\mathbf{u}}

\newcommand{\eps}{\varepsilon}
\newcommand{\rve}{\rVert}
\newcommand{\lve}{\lVert}

\newcommand{\bm}{\mathbf{m}}

\newcommand{\rA}{\mathrm{A}}

\newcommand{\rrA}{\mathrm{A}_2}

\newcommand{\rH}{\mathrm{ H}}
\newcommand{\rK}{\mathrm{K}}
\newcommand{\rV}{\mathrm{ V}}
\newcommand{\divv}{\mathrm{div }\;}

\newcommand{\err}{\mathbb{R}}

\newcommand{\sol}{\text{sol}}


\title[Stochastic Ericksen-Leslie equations]{A note on the stochastic Ericksen-Leslie equations for nematic liquid crystals}

\author[Z. Brze\'zniak, E. Hausenblas and P. A. Razafimandimby]{}


\subjclass{Primary: 60H15, 37L40; Secondary: 35R60.}
 \keywords{Ericksen-Leslie equations, nematic liquid crystals, fixed point method, smooth solution, local solution.}

 \email{zdzislaw.brzezniak@york.ac.uk}
\email{erika.hausenblas@unileoben.ac.at}
 \email{paul.razafimandimby@up.ac.za}

\thanks{E. Hausenblas is supported by the FWF-Austrian Science Fund through the Stand-Alone grant number P28010}

\thanks{This article is part of a project that is currently funded by the  European Union's Horizon 2020 research and innovation programme under the Marie Sk\l{}odowska-Curie grant agreement No. 791735 ``SELEs". Part of this work was written while P. A. Razafimandimby was at the University of Pretoria; he is grateful to the funding he received from the National Research Foundation South Africa (Grant Numbers 109355 and 112084). He is also grateful to the European Mathematical Society for the EMS-Simons for Africa-Collaborative research grant which enables him to visit Montanuniversit\"at Leoben, Austria.}

\thanks{$^*$ Corresponding author: Paul Razafimandimby}
\begin{document}
\maketitle

\centerline{\scshape Zdzis\l aw Brze\'zniak }
\medskip
{\footnotesize
 \centerline{Department of Mathematics}
   \centerline{University of York}
   \centerline{ Heslington Road, York YO10 5DD, UK}
} 

\medskip

\centerline{\scshape Erika Hausenblas}
\medskip
{\footnotesize
 \centerline{ Department of Mathematics and Information Technology}
   \centerline{ Montanuniversit\"at Leoben}
   \centerline{Franz Josef Stra\ss e 18, 8700 Leoben, Austria}
}
\medskip
\centerline{\scshape Paul Andr\'e Razafimandimby}
\medskip
{\footnotesize
 \centerline{ Department of Mathematics and Applied Mathematics}
   \centerline{University of Pretoria}
   \centerline{Lynwood Road, Pretoria 0002, South Africa}

 \centerline{Current Address:} {\footnotesize
 	\centerline{Department of Mathematics}
 	\centerline{University of York}
 	\centerline{ Heslington Road, York YO10 5DD, UK}
 }
}

\bigskip

\centerline{(Communicated by Bj\"orn Schmalfu\ss)}

	
\begin{abstract}
	In this note we prove the existence and uniqueness of local maximal smooth solution of the stochastic simplified Ericksen-Leslie systems modelling the dynamics of nematic liquid crystals under stochastic perturbations.
\end{abstract}

	\section{Introduction }\label{sec-Intro}

Liquid crystal, which is a state of matter that has properties between
amorphous liquid and crystalline solid, can be classified into two groups according to the form of their molecules. Liquid crystals with rod-shaped molecules are called calamitics while those with disc-like molecules are referred to discotics. In its turn, the calamitics can be divided into two phases: nematic and smectic. The nematic phase, referred to as nematic liquid crystal, is the simplest of liquid crystal phases.   Nematic liquid crystals
tend to align along a particular direction denoted by a unit vector $\mathbf{d}$, called the optical director axis.  Most of the interesting
phenomenology of nematic liquid crystals are linked to the geometry and
dynamics of this director. We refer to \cite{Chandrasekhar}
and \cite{Gennes} for a comprehensive treatment of the physics of
liquid crystals.

To model the hydrodynamics of nematic liquid crystals, most scientists
use the continuum theory developed by Ericksen \cite{Ericksen} and
Leslie \cite{Leslie}. From this theory, F. Lin and C. Liu
\cite{Lin-Liu} derived the most basic and simplest form of the dynamical system describing the motion of nematic liquid crystals
flowing in $\mathbb{R}^d (d=2,3)$. This
system is given by
\begin{subequations}\label{erick-leslie}
	\begin{align}
		& \bv_t+(\bv\cdot\nabla)\bv-\Delta \bv+\nabla p=-\lambda \nabla\cdot(\nabla \bd \odot \nabla \bd),\label{erick-leslie1}\\
		& \nabla\cdot \bv=0,\\
		& \bd_t+(\bv\cdot\nabla)\bd=\gamma(\Delta \bd+|\nabla \bd|^2\bd),\\
		& |\bd|^2=1.\label{erick-leslie4}
	\end{align}
\end{subequations}

Here $p:\mathbb{R}^d\to \mathbb{R}$, $\bv: \mathbb{R}^d \to \mathbb{R}^d$  and $\bd: \mathbb{R}^d\to \mathbb{R}^3$ represent the pressure, velocity of the fluid and the optical director, respectively. The symbol $\nabla \bd \odot
\nabla \bd$ stands for   a square $d\times d$-matrix with entries
given by
\begin{equation*}
	[\nabla \bd \odot \nabla \bd]_{i,j}=\sum_{k=1}^3 \frac{\partial
		\bd^k}{\partial x_i}\frac{\partial \bd^k}{\partial x_j},\;\;
	\mbox{ for any } i,j=1,\dots, d.
\end{equation*}
Since the work of  Lin and Liu \cite{Lin-Liu}, the Ericksen-Leslie system \eqref{erick-leslie}, its Ginzburg-Landau approximation,
in which the term $|\nabla \bd|^2\bd$ is replaced by $-\frac1\eps^2(\lvert \bd\rvert^2-1)\bd$ where $\eps>0$,
and their several generalizations, have been the subjects of intensive mathematical studies. We refer, among others, to \cite{Prohl,Cavaterra,Rojas-Medar2,Grasselli,Hong,Lin-Liu,
    Lin-Liu2,Lin-Wang,Lin-Lin-Wang,Liu-Walkington1,WZZ} for results obtained prior to 2013, and to \cite{Dai+Schonbeck_2014,Gal16,Hieber-16,Hong14,Huang14,Huang16,Lin+Wang16,Wang2014,Wang2016,WZZ15} for results obtained after 2014. For a detailed review of the literature about the mathematical theory of nematic liquid crystals and other related models, we recommend the review articles \cite{Lin-Wang14,Climent} and the recent paper \cite{Hieber-17}.


In this paper  we consider the following system of stochastic partial differential equations (SPDEs)
\begin{subequations}\label{F-LC}
	\begin{align}
		&	d\bv+\biggl[(\bv\cdot\nabla)\bv-\Delta \bv+\nabla p\biggr]dt=\biggl[-\nabla\cdot(\nabla \bd \odot \nabla \bd)\biggr]dt+ Q(\bv)\,d\tilde{W} \text{ in } \mo\times (0,T]\label{F-LC-1}\\
		&	\nabla\cdot \bv=0 \text{ in }  \mo\times [0,T],\label{F-LC-2}\\
		& \int_\mo \bv(t,x) dx=0 \text{ for all  } t \in [0,T] \\
		&	\partial_t \bd+(\bv\cdot\nabla)\bd =\Delta \bd+|\nabla \bd|^2\bd+ (\bd \times \mathbf{h})\circ d\eta \text{ in } \mo\times (0,T]\label{F-LC-3}\\
		&	|\bd|^2=1 \text{ in } \mo\times [0,T].\label{F-LC-4}\\
		& \bv(0,\cdot)=\bv_0 \text{ and } \bd(0, \cdot)=\bd_0 \text{ in } \mo, \label{F-LC-5}
	\end{align}
\end{subequations}
where
we denote by $\mo$ the $d$-dimensional torus $[-\pi, \pi]^d$, $d=2,3$, the mapping
$\mathbf{h}:\mathbb{R}^d \to \mathbb{R}^3$ is a given function, $W$  a cylindrical Wiener process evolving on a separable Hilbert space $\rK_1$, $\eta$ is a one-dimensional standard Brownian motion, and $Q$ is a nonlinear map satisfying several conditions specified later on. 


Throughout this paper we assume that  $\bv, \bd, p$, as well as $\bh$ are $2\pi$-periodic in the following sense:
\begin{equation}\label{Eq:Periodic-BC}
	u(x+2\pi e_i)=u(x), \;\; u \in \{\bv,\bd,p, \bh  \},\;\; x\in \mathbb{R}^d,  i \in \{1, \ldots, d\},
\end{equation}
where $\{e_i, i=1, \ldots, d\}$  is the canonical basis of $\mathbb{R}^d$.
In what follows, when we refer to problem \eqref{F-LC}, we refer to the system of equations \eqref{F-LC} with the boundary condition given in \eqref{Eq:Periodic-BC}.

The system of SPDEs \eqref{F-LC} describes the dynamics of nematic liquid crystal with a stochastic perturbation.     Our investigation is motivated by the need for a mathematical analysis of the effect of the stochastic external perturbation on the dynamics of nematic liquid crystals. While the role of noise on the dynamics of $\bd$ has been the subject of numerous theoretical and experimental studies in physics, see, for instance, \cite{Horsthemke+Lefever-1984,San Miguel-1985,FS+MSanM}, in which  it is found that the time needed by the system to leave an unstable state diminishes in the presence of fluctuating magnetic fields, there are almost no rigorous mathematical results in this direction of research. The works
\cite{Horsthemke+Lefever-1984,San Miguel-1985,FS+MSanM} and the mathematical papers we cited earlier neglected either the effect of the velocity $\bv$ or the stochastic external perturbation, although, de Gennes and Prost \cite{Gennes}  noted that $\bv$ plays an essential role in the dynamics of $\bd$. It is this gap in knowledge that is the motivation for our mathematical study.
The current authors established in \cite{ZB+EH+PR-16} some existence, uniqueness and a maximum principle results for the stochastic version of a Ginzburg-Landau approximation of the system \eqref{F-LC}  without the sphere condition \eqref{F-LC-4}.

In this paper we study the local resolvability of  problem \eqref{F-LC}. Our result can be summarized as follows. Given a number $\alpha>\frac d2$ and a square integrable   $\sH^{\alpha}\times\bH^{\alpha+1}$-valued random variable $(\bv_0,\bd_0)$ we can find
a stopping time $\tilde{\tau}_\infty$ which can be approximated by an increasing sequence of stopping times $(\tau_m)_{m \in \mathbb{N}}$
and a unique local stochastic process $(\bv, \bd)=(\bv(t), \bd(t)), 0 \leq t< \tilde{\tau}_\infty$  satisfying the following conditions
\begin{enumerate}
	\item $\tilde{\tau}_\infty >0$ with positive probability,
	\item  $ (\bv(\cdot\wedge \tau_m),\bd(\cdot\wedge \tau_m))\in C([0,T];\sH^{\alpha} \times \bH^{\alpha+1})\cap \el^2(0,T; \bH^{\alpha+1} \times \bH^{\alpha+2}) $ for any  $m \in \mathbb{N}$, with probability $1$;
	\item  and for all $t\in (0,T]$ and $m \in \mathbb{N}$ we have $\mathbb{P}$-a.s.\ $\lvert \bd (t\wedge \tau_m, x)\rvert=1$  for all $x \in \MO$,
	\item the process $(\bv, \bd)=(\bv(t), \bd(t)), 0 \leq t< \tilde{\tau}_{\infty}$  is a unique  local solution to problem \eqref{F-LC}, see Definitions \ref{def-local solution} and \ref{def-maximal solution}.
\end{enumerate}
Moreover,  we  established  probabilistic lower  bounds on the lifespan $\tilde{\tau}_\infty$ of the local maximal solution.

These results extend to the stochastic case the local existence and uniqueness results for \eqref{erick-leslie} obtained for the deterministic model by Wang et al.\  in \cite{WZZ}. Our proof consists of two steps. In the first one, we apply earlier results obtained in \cite{ZB+EH+PR-16} to prove the existence and uniqueness of a maximal local solution satisfying the mild form of equations \eqref{F-LC-1}-\eqref{F-LC-3}. In the second one, we prove that when properly localised the local solution preserves the sphere condition \eqref{F-LC-4}.

The structure of the paper is as follows. In section \ref{sec-spaces} we present the main notation and standing assumptions we will be using in the whole paper. In section \ref{Sec-main-results}, we introduce the concept of a solution and state our main results. The proofs of the main theorems are given in section \ref{Proof-Local-Sol} and section \ref{Proof-of-Sphere}.

\section{Functional spaces and hypotheses}\label{sec-spaces}
We begin by introducing the necessary definitions of functional spaces frequently used in
this work.
\noindent  We denote by $\MO$ the $d$-dimensional torus  $d=2,3$. Functions defined on $\MO$ will be frequently identified with functions defined on the set $[-\pi, \pi]^d$ satisfying appropriate to their regularity periodic boundary conditions, for example, \eqref{Eq:Periodic-BC}.

Throughout this paper we denote by $L^p(\mo)$ and ${W}^{m,p}(\mo)$, $p\in [1,\infty]$, $m\in \mathbb{N}$, the Lebesgue and Sobolev spaces of real valued functions defined on $\MO$, see e.g.\  the monograph \cite{Temam_1983} by Temam (compare \cite{BBM_2014}). The corresponding spaces of  $\mathbb{R}^d$(or some cases $\mathbb{R}^3$)-valued functions, will be denoted by the black-board fonts, e.g. the space $L^p(\mo,\mathbb{R}^d)$  will be denoted by $\el^p(\mo)$.

For non-integer $r>0$ the Sobolev spaces ${H}^{r,p}(\mo)$ and $\mathbb{H}^{r,p}(\mo)$ are  defined by using the complex interpolation method. We will also use the notation $H^r(\mo):=W^{r,2}(\mo)$.
We simply skip the symbol  of the torus $\mo$, when there is no risk of ambiguity. For instance  we will write  $L^p$, resp.\ $\el^p$  or $\bw^{m,p}$ instead of $L^p(\mo)$, resp.\  $\el^p(\mo)$ or  $\bw^{m,p}(\mo)$.

Given two Banach spaces $K$ and $H$,  we denote by $\mathscr{L}(K,H)$ the space of bounded linear operators. For two Hilbert space $K$ and $H$ we denote by $\mathscr{L}_2(K,H)$ the Hilbert space
of all Hilbert-Schmidt operators from $K$ to $H$. For $K=H$ we just write $\mathscr{L}(K)$ instead of $\mathscr{L}(K,K)$.

Following \cite{Temam_1983} we also introduce the following spaces
\begin{align*}
	\el^2_{0}=& \biggl\{ \bu \in L^2(\mo,\mathbb{R}^d): \int_\MO \bu(x) dx=0    \biggr\}, \\
	\rH=&\biggl\{ \bu \in \el^2_{0}:  \divv \bu=0   \biggr\},\\
	\mathbb{H}^{r}_{\text{sol}}=&\rH \cap \mathbb{H}^{r}, \;\; r\in (0, \infty),\quad
	\rV=\mathbb{H}^{1}_{\text{sol}}.
\end{align*}
In the above formula, the divergence is understood in the weak sense.
Note that $\mathbb{H}^{0}_{\text{sol}}=\rH$.

In  \eqref{F-LC}, it is convenient to eliminate the pressure $p$ by applying the Helmholtz-Leray projector operator $\Pi:\el^2\to \rH$ which projects into divergence free vectors and annihilates gradients. One of the remarkable properties of $\Pi$ is that $\Pi\in \mathscr{L}(\mathbb{H}^{r},\mathbb{H}^{r}_\sol)$, $r>0$, see \cite{ZB+SC+MF}. We will frequently use this property without further notice.

Next, we define the Stokes operator, denoted by $\rA$,  which is an unbounded linear operator on $\mathrm{H}$, as follows.
\begin{equation}
	\label{def-A}
	\left\{
	\begin{array}{ll}
		D(\rA) &:= \mathbb{H}_{\sol}^2 \cr \rA u&:=- \Pi \Delta u, \, u \in
		D(\rA).
	\end{array}
	\right.
\end{equation}
It is well known that $\rA$ is a strictly positive self-adjoint operator in $\rH$ and that $D(\rA^{1/2})=\rV$. It is also true that $\rA$ is a strictly positive self-adjoint operator in every space $\mathbb{H}^{r}_{\text{sol}}$, $r>0$.

We will also need a version of the Laplace operator acting on $\mathbb{R}^3$-valued functions defined on $\mo$, i.e.
\begin{equation}
	\label{def-A_2}
	\left\{
	\begin{array}{ll}
		D(\mathrm{A_2}) &:= \mathbb{H}^2(\mo,\mathbb{R}^3), \cr \rA_2u&:=-  \Delta u, \, u \in
		D(\mathrm{A_2}).
	\end{array}
	\right.
\end{equation}
It is well known that $\rA_2$ is a non-negative self-adjoint operator in ${L}^2(\mo,\mathbb{R}^3)$. It is also true that $\rA_2$ is a non-negative self-adjoint operator in every space ${H}^r(\mo,\mathbb{R}^3)$, $r>0$.

It is well-known that 	$-\rA$ (resp. $-\rA_2$) is the infinitesimal  generator   analytic $C_0$-semigroup of contractions
on $\rH$, resp.\ $L^2(\mo,\mathbb{R}^3)$. These semigroups will be denoted by
$\{\mathbf{S}(t): t\geq 0\}$  and  $\{\mathbf{T}(t):t\geq 0\}$.
Moreover, for $s^\prime>s$ there exists a constant $M$ (depending on the difference $s^\prime-s$ and $p$)
such that we have (compare Lemma 1.2 in  the
Kato-Ponce's paper \cite{KP86})
\begin{equation}\label{semigruppo}
	\|\mathbf{S}(t)  \|_{\mathscr L(\mathbb{H}^{s}_{\mathrm{sol}};\mathbb{H}^{s^\prime}_{\mathrm{sol}})}\le M (1+t^{-(s^\prime-s)/2}), \;\;\; t>0,
\end{equation}
and
\begin{equation}\label{semigruppo-heat}
	\|\mathbf{T}(t)  \|_{\mathscr L(\mathbb{H}^{s};\mathbb{H}^{s^\prime})}\le M (1+t^{-(s^\prime-s)/2}), \;\;\; t>0.
\end{equation}
Let us note  the following  inequality involving fractional Sobolev norms.
\begin{align}
	\Vert fg\Vert_{\mathbb{H}^{s}}\le c_0(\Vert  f \Vert_{\el^ \infty} \Vert g\Vert_{\mathbb{H}^{s}} + \Vert  f \Vert_{\mathbb{H}^{s}}  \Vert g\Vert_{\el^ \infty}), \;\; f,g\in \el^ \infty \cap \mathbb{H}^{s}.
	\label{Comm-Est-2}
\end{align}

Now let $\alpha>d/2$. For any $\bu,\bv \in \sH^{\alpha}$ and $\bd, \bm \in \bH^{\alpha+1}$, where  now
$\bH^{\alpha+1}={H}^{\alpha+1}(\mo,\mathbb{R}^3)$,    we set
\begin{align}
	& B(\bu,\bv)=\Pi (\bu\cdot \nabla \bv),\\
	& M(\bd,\bm)=-\Pi(\nabla \cdot (\nabla \bd \odot \nabla \bm) ),\\
	& \tilde{B}(\bv,\bd)=\bv \cdot \nabla \bd.
\end{align}
Later on, we will state and prove few crucial properties of these nonlinear maps.
%

Let us fix $\bd\in \bH^{\alpha+1}$ and set
\begin{equation}\label{eqn-G} G(\bh)=\bh \times \bd, \;\; \;\bh\in \bH^{\alpha+1}.
\end{equation}
It is easy to see that $G\in \mathscr{L}(\bH^{\alpha+1})$.
Let us note that the  map $G^2 $, also an element of $\mathscr{L}(\bH^{\alpha+1})$, is of the following form
$$G^2(\bd)=G\circ G(\bd)=(\bd\times \bh)\times \bh. $$
Let $(\Omega,
\mathcal{F}, \mathbb{P})$ be a complete probability space equipped
with a filtration $\mathbb{F}=\{\mathcal{F}_t: t\geq 0\}$
satisfying the usual conditions.
Let $\tilde{W}=(\tilde{W}(t))_{t\ge 0}$ be a cylindrical Wiener process evolving on a separable Hilbert space $\rK_1$ such that it is formally written as a series
$$\tilde{ W}(t)= \sum_{k=1}^\infty \tilde{w}_k(t) \varphi_k, \; \; \forall t\ge 0,
$$
where $(\tilde{w}_k(t))_{k\in \mathbb{N}, t\ge0}$ is a family of i.i.d.  standard Brownian motions and $\{\varphi_k; k \in \mathbb{N}\}$ is an orthonormal basis of $\rK_1$.
The above series does not converge in $\rK_1$, but it does converges in a separable Hilbert space  $\tilde{\rK}_1$ such that the embedding $\rK_1 \subset \tilde{\rK}_1$ is Hilbert-Schmidt. It is well-known also that $\tilde{W}$ has a modification, still denoted by $\tilde{W}$, whose trajectories are  continuous  $\tilde{\rK}_1$-valued functions.  Let $\eta$ be a standard one dimensional Brownian motion and $\bh$ be a smooth vector fields.


We now introduce the assumptions on the coefficient $Q$ of the noise.
\begin{assum}\label{HYPO-ST}
	We fix  $\alpha>d/2$	and we assume that \[Q: \sH^{\alpha} \to \mathscr{L}_2(\rK_1,\sH^{\alpha})\] is a globally Lipschitz map. In particular,
	there exists $\ell_0\geq 0$
	such that
	\begin{equation*}
		\lve  Q(\bu)\rve^2_{\mathscr{L}_2(\rK_1,\sH^{\alpha})}\leq \ell_0 (1+\lve \bu \rve^2_{\sH^{\alpha}}),\;\; \mbox{ for any } \bu \in \sH^{\alpha}.
	\end{equation*}
\end{assum}
Hereafter \dela{for $\alpha>d/2$} we set
\begin{equation}\label{eqn-spaces}
	\begin{split}
		\mathbf{H}_\alpha=&\sH^{\alpha-1}\times \bH^{\alpha},\\
		\mathbf{V}_\alpha=&\sH^{\alpha}\times \bH^{\alpha+1},\\
		\mathbf{E}_\alpha=&\sH^{\alpha+1}\times \bH^{\alpha+2}.
	\end{split}
\end{equation}
The stochastic equations for nematic
liquid crystal \eqref{F-LC} can be rewritten as a stochastic evolution equation in the space $\mathbf{H}_\alpha$:
\begin{equation}\label{ABSTRACT-LC}
	d\y(t) +\mathbf{A}\y(t) dt+\mathbf{F}(\y(t))
	dt+\mathbf{L}(\y(t))dt=\mathbf{G}(\y(t)) d{W}(t),\quad t\ge 0,
\end{equation}
where, for  $\y=(\bv, \d)\in \mathbf{E}_\alpha$ and $k=(k_1,k_2)\in \rK:=\rK_1\times \err$,
we have
\begin{equation}\label{eqn-def-A-F}
	\A\y=\begin{pmatrix} \rA \bv  \\
		\rrA \bd
	\end{pmatrix},\;\;\quad
	\f(\y)=\begin{pmatrix} B(\bv,\bv)+M(\bd)\\
		\tilde{B}(\bv,\bd)+\lvert \nabla \bd \rvert^2 \bd
	\end{pmatrix},
\end{equation}
\begin{equation}\label{eqn-def-L-G}
	\mathbf{L}(\y)=\begin{pmatrix} 0\\ -\frac 1 2 G^2(\bd)
	\end{pmatrix}, \quad \quad\mathbf{G}(\y)k=\begin{pmatrix}Q(\bu)k_1\\
		G(\bd)k_2 \end{pmatrix}.
\end{equation}
The process $W$ is a cylindrical Wiener process on $\rK$ such that for any $t\ge0$
\begin{equation*}
	W(t)=\begin{pmatrix} \tilde{W}(t)\\
		\eta(t)
	\end{pmatrix},\quad t\ge 0
	.
\end{equation*}
\section{Existence and uniqueness of local maximal solution}\label{Sec-main-results}
We first recall several definitions and concepts  which are given in the  following notations/definitions and are borrowed
from \cite{Brz+Elw_2000} or \cite{Kunita-90}.
\begin{Def}(compare \cite[p.\ 45]{Kunita-90})
	For a probability  space  $(\Omega,\mathcal{F}, \mathbb{
		P})$      with  a given right-continuous filtration $
	\mathbb{F}=\big(\mathcal{F}_t\big)_{t\ge  0}$,   a  stopping time
	$\tau$ is called accessible iff there exists an  increasing
	sequence of stopping times  $\tau_n$ such that a.s. $\tau_n <
	\tau$ and $\lim_{n\to \infty} \tau_n =\tau$.
\end{Def}
\vspace*{4pt}\noindent\textbf{Notation.}  For $t\ge 0$  and a  stopping  time  $\tau$  we  set
\[ \Omega_t(\tau) :=
\{ \omega \in \Omega : t < \tau(\omega)\},
\]
\[
[0,\tau)\times \Omega :=
\{ (t,\omega) \in [0,\infty)\times \Omega: 0\le t < \tau(\omega)
\}.
\]

\begin{Def} If  $X$ is    a topological space, then
	an $X$-valued  process $\xi : [0,\tau) \times \Omega \to X$,      is
	{\sl admissible} iff
\begin{itemize}
%
		\item[(i) ] it is adapted, i.e.  $\xi|_{\Omega_t(\tau)}: \Omega_t(\tau) \to
		X$ is $\mathcal{F}_t$ measurable, for any $t\ge 0$; \item[(ii)]
		for almost all $\omega \in \Omega$, the function $[0,
		\tau(\omega))\ni t \mapsto \xi(t, \omega) \in X$ is continuous.
\end{itemize}
	We  will also use for an admissible process $\xi : [0,\tau) \times \Omega \to X$   the notation  $ \{\xi(t), t< \tau\}$ or $(\xi,\tau)$.
	
	
	A process $\xi : [0,\tau) \times \Omega \to X$ is  {\sl progressively
		measurable}  iff for any $t> 0$  the  map $$[0,t\wedge \tau)
	\times \Omega \ni  (s,\omega) \mapsto  \xi(s,\omega) \in  X$$ is
	$\mathcal{B}_{t\wedge \tau} \times \mathcal{F}_{t\wedge \tau}$ measurable.

	Two  processes $\xi_i: [0,\tau_i)   \times \Omega  \to X$,
	$i=1,2$ are called {\sl equivalent},   iff $\tau_1=\tau_2$  a.s. and  for any  $t>0$
	the  following holds
	\[
	\xi_1(\cdot,\omega)= \xi_2(\cdot,\omega) \mbox{ on } [0,t] \mbox{       	for a.a. $\omega \in \Omega_t(\tau_1)\cap \Omega_t(\tau_2)$.
	}
	\]
	We will use for two equivalent processes $\xi_1$ and $\xi_2$ the notation $(\xi_1,\tau_1) \sim
	(\xi_2,\tau_2)$)
\end{Def}

Note that if processes  $\xi_i  :  [0,\tau_i)  \times  \Omega \to
X$,  $i=1,2$ are admissible and for any $t>0$
$\xi_1(t)|_{\Omega_t(\tau_1)}= \xi_2(t)|_{\Omega_t(\tau_2)}$
a.s.\, then they are  equivalent.


We now define some concepts of solution to \eqref{ABSTRACT-LC}, see \cite[Def. 4.2]{Brz+Millet_2012} or \cite[Def.
2.1]{Mikulevicius}.

\begin{Def}\label{def-local solution} Let $\y_0$ be a $\mathbf{V}_\alpha$-valued  $\mathcal{F}_0$--measurable random variable  such that $\mathbb{E} \Vert \y_0\Vert^2_{\mathbf{V}_\alpha}<\infty$. A local mild
	solution to problem \eqref{ABSTRACT-LC} with initial condition $\y(0)=\y_0$ is a pair $(\y,\tau)$ such that
	\begin{enumerate}
		\item $\tau$ is an accessible $\mathbb{F}$-stopping time, \item
		$\y: [0,\tau)\times \Omega \to \mathbf{V}_\alpha$ is an admissible process, \item there
		exists an approximating sequence  $(\tau_m)_{m\in \mathbb{N}}$ of
		finite $\mathbb{F}$-stopping times  such that $\tau_m \toup \tau$
		a.s. and, for every $m\in \mathbb{N}$ and $t\ge 0$,
		we have
		\begin{eqnarray}
&&\hspace{-2truecm}\lefteqn{\mathbb{E}\Big(  \sup_{s\in [0,t\wedge \tau_m]} \Vert
				\y(s)\Vert^2_{\mathbf{V}_\alpha} +\int_0^{t\wedge \tau_m} \Vert \y(s)\Vert_{\mathbf{E}_\alpha}^2 \,
				ds\Big)<\infty,} \label{eq-locsol_01}
			\\
			\label{eq-locsol_01-b}\hspace{-1truecm} \y(t\wedge \tau_m)&=& \mathbb{S}(t\wedge
			\tau_m)\y_0-\int_0^{t\wedge \tau_m}
			\mathbb{S}(t\wedge\tau_m-s)[ \mathbf{F}(\y(s))+\mathbf{L}(\y(s)]\; ds\\
			\nonumber &+&\int_0^{t\wedge\tau_m}
			\mathbb{S}(t\wedge\tau_m-s)\mathbf{G}(\y(s))\,
			d{W}(s)  \mbox{ in }  \mathbf{H}_\alpha\; \mathbb{P}\text{-a.s.}
		\end{eqnarray}
		\item The stopped processes $\bd(\cdot \wedge \tau_m)$, $m\in \mathbb{N}$, satisfies: for all
		$t \in [0,T]$, $m \in \mathbb{N}$ , $\mathbb{P}$-a.s.\
		
		\begin{equation}\label{Sphere-contraint}
			\lvert \bd(t\wedge \tau_m,x,\omega)\rvert^2=1 ,
		\end{equation}
		for all $x\in \mo$.
		
	\end{enumerate}
\end{Def}
We also introduce the notion of maximal local solution and its lifespan.

\begin{Def}\label{def-maximal solution}
	Let us denote the family of all local  mild solution
	$(u,\tau)$ to  the problem \eqref{ABSTRACT-LC} by $\mathcal{ LS}$. For two
	elements $(u,\tau), (v,\sigma) \in \mathcal{ LS} $ we write
	$(u,\tau)\preceq (v,\sigma)$, iff $\tau \leq \sigma$ a.s.\ and
	$v_{\vert [0,\tau)\times \Omega} \sim u$. We write $(u,\tau)\prec
	(v,\sigma)$, iff $(u,\tau)\preceq (v,\sigma)$ and $\tau<\sigma$ with positive probability.
	If there exists a maximal element  $(u,\tau)$ in the set $(\mathcal{
		LS},\preceq)$, then it is 
	is called a maximal local  mild solution    to  the problem \eqref{ABSTRACT-LC}.
	If $(u, \tau)$ is a  maximal local mild solution to equation
	\eqref{ABSTRACT-LC}, then the stopping time $\tau$ is called its
	lifetime.
	
\end{Def}
\begin{Rem}
	\begin{mlist}
		\item  Note that if
		$(u,\tau)\preceq (v,\sigma)$ and $(v,\sigma)\preceq (u,\tau)$,
		then  $(u,\tau)\sim (v,\sigma)$.
		\item The pair $(\mathcal{ LS},\preceq)$ is a
		partially ordered set in which, according to the  Elworthy's Amalgamation
		Lemma, see \cite[Lemmata III 6A and
		6B]{Elw_1982},   every non-empty chain has a least upper bound.
		%
	\end{mlist}
	
\end{Rem}

Having defined our solution concept, we can now state and prove the existence of a maximal local solution for our model. We also give a lower estimate and a characterisation of the local solution's lifespan.
\begin{thm}\label{LC-Local-Sol}
    Let $d\in \{2,3\}$, $\alpha>d/2$, $\bh\in \bH^{\alpha+1}$. If Assumption \ref{HYPO-ST} is satisfied, then for all $\mathcal{F}_0$-measurable and square integrable $\sH^{\alpha}\times \bH^{\alpha+1}$-valued random variables  $\y_0=(\v_0,\d_0)$
    the problem \eqref{ABSTRACT-LC} for the stochastic liquid crystals has a unique local maximal strong solution $((\bv;\bd), \tilde{\tau}_\infty)$  satisfying the following properties:
	\begin{enumerate}
		\item given $R>0$ and  $\varepsilon >0$ there exists
		$\tau(\varepsilon,R)>0$,  such that for every
		$\mathcal{F}_0$-measurable $\sH^{\alpha}\times \bH^{\alpha+1}$-valued random variable $(\v_0,\d_0)$
		satisfying\break  $\mathbb{E}\Vert (\v_0,\d_0) \Vert^{2}_{\bH^{\alpha}\times \bH^{\alpha+1} } \leq R^{2}$, one has
		\[{\mathbb P}\big(\tilde{\tau}_\infty \geq \tau(\varepsilon,R)\big) \geq
		1-\varepsilon.\]
		\item \label{THM-ii} We also have
		\begin{align}
			\mathbb{P}\left(\{ \tilde{\tau}_\infty <\infty \}\cap \{ \lve  (\bv  , \bd) \rve_{C([0,T];\sH^{\alpha}\times \bH^{\alpha+1})  }<\infty  \} \right)=0,\label{MAX-P1}\\
			\limsup_{t\toup\tilde{\tau}_\infty } \lve \bv(t) \rve_{\sH^{\alpha}} + \lve \bd(t) \rve_{\bH^{\alpha+1}}=\infty \text{ a.s. on } \{\tilde{\tau}_\infty<\infty \}\label{MAX-P2}.
		\end{align}
	\end{enumerate}
	
\end{thm}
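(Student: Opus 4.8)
The plan is to recast \eqref{F-LC} as the abstract semilinear stochastic evolution equation \eqref{ABSTRACT-LC} on the scale $\mathbf{E}_\alpha \hookrightarrow \mathbf{V}_\alpha \hookrightarrow \mathbf{H}_\alpha$ and to produce a maximal local solution by the classical truncation-plus-contraction scheme, invoking the abstract local-existence machinery of \cite{ZB+EH+PR-16} once the structural hypotheses on the coefficients are verified. The first task is therefore to show that $\mathbf{F}:\mathbf{V}_\alpha\to\mathbf{H}_\alpha$ and $\mathbf{L}:\mathbf{V}_\alpha\to\mathbf{H}_\alpha$ are locally Lipschitz with at most polynomial growth, and that $\mathbf{G}:\mathbf{V}_\alpha\to\mathscr{L}_2(\rK,\mathbf{V}_\alpha)$ is globally Lipschitz. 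For $\mathbf{F}$ this rests on the hypothesis $\alpha>d/2$, which makes $\mathbb{H}^\alpha$ a Banach algebra, together with the product inequality \eqref{Comm-Est-2} and the boundedness of $\Pi$: one checks that $B(\bv,\bv)=\Pi(\bv\cdot\nabla\bv)$ and $M(\bd)=-\Pi\,\nabla\cdot(\nabla\bd\odot\nabla\bd)$ land in $\sH^{\alpha-1}$, while $\tilde{B}(\bv,\bd)$ and $|\nabla\bd|^2\bd$ land in $\bH^{\alpha}$, each quadratic term being locally Lipschitz by the usual $ab-a'b'=a(b-b')+(a-a')b'$ splitting. For $\mathbf{G}$ and $\mathbf{L}$ the bounds are immediate from Assumption \ref{HYPO-ST} and the fact that $G,G^2\in\mathscr{L}(\bH^{\alpha+1})$.

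Next I would fix $R>0$, a smooth cutoff $\theta_R$ with $\theta_R\equiv 1$ on $[0,R]$ and $\theta_R\equiv 0$ on $[2R,\infty)$, and replace $\mathbf{F},\mathbf{L}$ by $\mathbf{F}_R(\y)=\theta_R(\lVert\y\rVert_{\mathbf{V}_\alpha})\mathbf{F}(\y)$ and likewise $\mathbf{L}_R$, rendering the truncated coefficients globally Lipschitz. The solution map
\[
\Phi_R(\y)(t)=\mathbb{S}(t)\y_0-\int_0^t\mathbb{S}(t-s)[\mathbf{F}_R(\y(s))+\mathbf{L}_R(\y(s))]\,ds+\int_0^t\mathbb{S}(t-s)\mathbf{G}(\y(s))\,dW(s)
\]
is then shown to be a contraction on the Banach space of progressively measurable $\mathbf{V}_\alpha$-valued processes with $\mathbb{E}\sup_{[0,T]}\lVert\cdot\rVert^2_{\mathbf{V}_\alpha}<\infty$, for $T$ small. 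The deterministic convolution is controlled using the smoothing estimates \eqref{semigruppo}--\eqref{semigruppo-heat}: gaining one derivative from $\mathbf{H}_\alpha$ to $\mathbf{V}_\alpha$ costs only the integrable singularity $(t-s)^{-1/2}$, so that term carries a factor $\sim T^{1/2}$, while the stochastic convolution is handled by the factorisation method together with a maximal inequality, which again produces a positive power of $T$. Because the truncated coefficients are globally Lipschitz, the local solution extends to a unique global solution $\y_R$ on $[0,T]$ for every $T$, and the additional $\el^2(0,T;\mathbf{E}_\alpha)$ regularity required in \eqref{eq-locsol_01} follows from the parabolic maximal regularity of the analytic semigroup.

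To assemble the maximal solution I would set $\tau_R=\inf\{t\ge 0:\lVert\y_R(t)\rVert_{\mathbf{V}_\alpha}\ge R\}$ and use uniqueness to verify the consistency relation $\y_R=\y_{R'}$ on $[0,\tau_R]$ for $R<R'$, since the two cutoffs agree below level $R$. The processes then glue into an admissible process $\y$ on $[0,\tilde{\tau}_\infty)$ with $\tilde{\tau}_\infty:=\sup_R\tau_R$ and approximating sequence $(\tau_R)$; maximality and uniqueness in the sense of Definition \ref{def-maximal solution} follow from the Elworthy amalgamation structure recalled in the Remark. Property (1) is a quantitative form of local existence: an a priori energy estimate for the truncated problem gives $\mathbb{E}\sup_{[0,t]}\lVert\y_N(s)\rVert^2_{\mathbf{V}_\alpha}\le C(N,t)(1+R^2)$ whenever $\mathbb{E}\lVert\y_0\rVert^2_{\mathbf{V}_\alpha}\le R^2$, so Chebyshev's inequality bounds $\mathbb{P}(\tau_N\le t)$, and choosing first $N$ large and then $t=\tau(\varepsilon,R)$ small yields $\mathbb{P}(\tilde{\tau}_\infty\ge\tau(\varepsilon,R))\ge\mathbb{P}(\tau_N>t)\ge 1-\varepsilon$. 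Property (2) is the standard blow-up alternative: on the event $\{\tilde{\tau}_\infty<\infty\}\cap\{\lVert(\bv,\bd)\rVert_{C([0,T];\mathbf{V}_\alpha)}<\infty\}$ the solution stays below some level $R$ up to $\tilde{\tau}_\infty$, so $\tau_R=\tilde{\tau}_\infty$ for large $R$ and the global solution $\y_R$ could be continued strictly beyond $\tilde{\tau}_\infty$, contradicting maximality; this forces \eqref{MAX-P1} and hence \eqref{MAX-P2}.

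The main obstacle I anticipate is closing the contraction at the top regularity level $\mathbf{V}_\alpha$ despite the genuine derivative loss in the drift: $\mathbf{F}$ only maps into $\mathbf{H}_\alpha$, one order below, and all three quadratic terms $B$, $M$, $|\nabla\bd|^2\bd$ must be estimated simultaneously so that \eqref{Comm-Est-2} and the $(t-s)^{-1/2}$ smoothing combine into a contraction constant genuinely below $1$ for small $T$, uniform in the initial data. A subsidiary difficulty is obtaining the $\el^2(0,T;\mathbf{E}_\alpha)$ bound and the supremum-in-time control of the stochastic convolution at once, which needs the maximal-regularity and factorisation estimates rather than naive Gronwall arguments. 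The preservation of the sphere constraint \eqref{F-LC-4}, which is part of the solution concept in Definition \ref{def-local solution}, is orthogonal to the fixed-point argument and is established separately in Section \ref{Proof-of-Sphere}.
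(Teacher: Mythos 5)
Your proposal is correct and follows essentially the same route as the paper: verify that the nonlinearities $\mathbf{F}+\mathbf{L}$ and $\mathbf{G}$ satisfy the local-Lipschitz/interpolation hypotheses (the paper's Lemmata \ref{lem-Bilinears-Est} and \ref{lem-Nonlinear-Est} leading to \eqref{nonlinear-F}), check the semigroup convolution bounds via the Pardoux-type lemmas, and then run the truncation-plus-contraction and amalgamation machinery. The only difference is that you sketch that abstract machinery explicitly, whereas the paper delegates it wholesale to Theorems \ref{thm_local} and \ref{thm_maximal-abstract} of \cite{ZB+EH+PR-16}; do note that the contraction must be closed in the full norm of $X_T=C([0,T];\mathbf{V}_\alpha)\cap L^2(0,T;\mathbf{E}_\alpha)$, since the estimate \eqref{1st-EST} genuinely involves the $\mathbf{E}_\alpha$-norm to the fractional power $\delta$.
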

We will show in the next theorem that the local solution from Theorem \ref{LC-Local-Sol} satisfies \eqref{F-LC-4}.
\begin{thm}\label{Thm-Sphere-cond}
	Assume that all the assumption of Theorem \ref{LC-Local-Sol} are satisfied. Let $\y_0 =(\v_0,\bd_0)\in \mathbf{V}_\alpha$ such that
	$\lvert \bd_0(\omega,x)\rvert^2= 1$ for  all  $\omega\in \Omega$ and all $x \in  \MO$. Let $(\y;\tau)=((\v, \bd);\tau)$ be a local solution to \eqref{ABSTRACT-LC} and $(\tau_m)_{m \in \mathbb{N}}$ an increasing sequence of stopping times approximating $\tau$. Then, for all $m \in \mathbb{N}$ and $t\in (0,T]$,  $\mathbb{P}$-a.s.
	$\lvert \bd(t\wedge \tau_m,x,\omega)\rvert^2 = 1$ for all   $x\in  \MO$.
\end{thm}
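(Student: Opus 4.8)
The plan is to isolate the $\bd$-component of \eqref{ABSTRACT-LC} and to show that the scalar field $\psi:=|\bd|^2-1$ satisfies a \emph{linear} parabolic equation with vanishing initial datum and no stochastic forcing, so that a Gronwall-type energy estimate forces $\psi\equiv 0$ on $[0,t\wedge\tau_m]$. First I would rewrite the stopped equation for $\bd(\cdot\wedge\tau_m)$ in It\^o form. Recalling from \eqref{eqn-def-L-G} that $\mathbf{L}(\y)=(0,-\tfrac12 G^2(\bd))$ is exactly the It\^o--Stratonovich corrector and that $G(\bd)=\bd\times\bh$, the Stratonovich form \eqref{F-LC-3} becomes, in the strong/variational sense granted by the regularity \eqref{eq-locsol_01},
\[ d\bd=\bigl[\Delta\bd-(\bv\cdot\nabla)\bd+|\nabla\bd|^2\bd+\tfrac12(\bd\times\bh)\times\bh\bigr]\,dt+(\bd\times\bh)\,d\eta \]
up to the time $\tau_m$. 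Since $\alpha>d/2$ we have $\bH^{\alpha+1}\embed \el^\infty$ (indeed $\embed C^1$), so every nonlinear term above—in particular $|\nabla\bd|^2$—is well defined and, on $[0,t\wedge\tau_m]$, uniformly bounded in $\el^\infty$.

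Next I would apply the It\^o formula to $t\mapsto|\bd(t)|^2$. Two cancellations are the heart of the matter. The martingale part is $2\,\bd\cdot(\bd\times\bh)\,d\eta$, which vanishes identically because $\bd\times\bh\perp\bd$; hence $\psi$ carries no stochastic integral. The quadratic-variation term equals $|\bd\times\bh|^2\,dt$, and using the identity $(\bd\times\bh)\times\bh=(\bd\cdot\bh)\bh-|\bh|^2\bd$ together with the Lagrange identity $|\bd\times\bh|^2=|\bd|^2|\bh|^2-(\bd\cdot\bh)^2$, one checks that
\[ 2\,\bd\cdot\bigl[\tfrac12(\bd\times\bh)\times\bh\bigr]+|\bd\times\bh|^2=(\bd\cdot\bh)^2-|\bh|^2|\bd|^2+|\bd|^2|\bh|^2-(\bd\cdot\bh)^2=0, \]
i.e.\ the It\^o correction exactly annihilates the Stratonovich drift corrector $\tfrac12 G^2(\bd)$. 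This is precisely where the Stratonovich formulation is essential. Using $2\,\bd\cdot\Delta\bd=\Delta|\bd|^2-2|\nabla\bd|^2$ and $2\,\bd\cdot(\bv\cdot\nabla)\bd=(\bv\cdot\nabla)|\bd|^2$, the deterministic terms collapse to
\[ d\psi=\bigl[\Delta\psi-(\bv\cdot\nabla)\psi+2|\nabla\bd|^2\,\psi\bigr]\,dt,\qquad \psi(0)=|\bd_0|^2-1=0, \]
a homogeneous linear parabolic equation for $\psi$ with random, time-dependent but sufficiently regular coefficients.

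Finally I would run the energy estimate, testing this equation against $\psi$: the Laplacian contributes $-\Vert\nabla\psi\Vert_{L^2}^2\le0$, the transport term vanishes after integration by parts because $\divv\bv=0$, and the zeroth-order term is controlled by $2\Vert|\nabla\bd|^2\Vert_{\el^\infty}\Vert\psi\Vert_{L^2}^2$. Hence
\[ \frac{d}{dt}\Vert\psi(t\wedge\tau_m)\Vert_{L^2}^2\le 4\,\Vert\,|\nabla\bd(t\wedge\tau_m)|^2\Vert_{\el^\infty}\,\Vert\psi(t\wedge\tau_m)\Vert_{L^2}^2 . \]
Since $\bd(\cdot\wedge\tau_m)\in C([0,T];\bH^{\alpha+1})$ and $\nabla\bd\in\bH^{\alpha}\embed\el^\infty$, the prefactor is integrable on $[0,T]$, $\mathbb P$-a.s.; Gronwall's lemma then yields $\Vert\psi(t\wedge\tau_m)\Vert_{L^2}=0$ for all $t$, whence $|\bd(t\wedge\tau_m,x)|^2=1$ for a.e.\ $x$, and for \emph{every} $x\in\MO$ by continuity of $\bd$ into $\bH^{\alpha+1}\embed C(\MO)$.

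I expect the principal obstacle to be the rigorous justification of the It\^o formula that produces the $\psi$-equation: the process $\bd$ is obtained as a mild solution, so one must first upgrade it to a strong/variational solution on $[0,t\wedge\tau_m]$—using the parabolic smoothing \eqref{semigruppo-heat} and the $\bE_\alpha$-regularity in \eqref{eq-locsol_01}—before applying an infinite-dimensional It\^o formula to the quadratic functional $\bd\mapsto\bd\cdot\bd$, taking care that the stopping-time localization renders every nonlinear term integrable. Once this is in place, the algebraic cancellations above are routine and the conclusion follows from the linear energy estimate.
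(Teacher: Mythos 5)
Your proposal is correct and rests on the same two algebraic cancellations that drive the paper's argument---the martingale term $2\,\bd\cdot(\bd\times\bh)\,d\eta$ vanishes by orthogonality, and the It\^o quadratic-variation term $|\bd\times\bh|^2$ exactly cancels $2\,\bd\cdot\tfrac12 G^2(\bd)$---but the overall structure is genuinely different. The paper does not work with $\psi=|\bd|^2-1$ as a whole: it splits into the negative part $(|\bd|^2-1)_-$ and the positive part $(|\bd|^2-1)_+$, and since $a\mapsto (a_{\pm})^2$ is not $C^2$ it first constructs smooth approximations $\Psi_\ell$ of the corresponding $\el^2$-functionals, applies It\^o's formula to each $\Psi_\ell$, and passes to the limit $\ell\to\infty$. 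The payoff of that splitting is that for the negative part the zeroth-order term $2|\nabla\bd|^2(|\bd|^2-1)$ has a favorable sign on the set $\{|\bd|^2\le 1\}$, so the bound $y(t\wedge\tau_m)\le y(0)=0$ comes with no Gronwall argument and no use of an $\el^\infty$ bound on $\nabla\bd$; only the positive part requires Gronwall, which the paper runs in expectation after localizing on the event $\Omega_{m,N}=\{\sup_{s}\lVert\nabla\bd(s)\rVert_{\el^\infty}\le N\}$. Your unified treatment buys a shorter proof: the functional $\bd\mapsto\int_\MO(|\bd|^2-1)^2\,dx$ is already smooth on $\bH^{\alpha}\embed\el^\infty$, so no regularization is needed, and since the coefficient $\lVert\,|\nabla\bd(\cdot\wedge\tau_m)|^2\rVert_{\el^\infty}$ is $\mathbb{P}$-a.s.\ bounded on $[0,T]$ (because $\bd(\cdot\wedge\tau_m)\in C([0,T];\bH^{\alpha+1})$ and $\bH^{\alpha}\embed\el^\infty$), a pathwise Gronwall inequality closes the argument without taking expectations or introducing the sets $\Omega_{m,N}$. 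The one point you rightly flag---justifying the It\^o formula for the quadratic functional of the mild solution on the stopped interval---is exactly the step the paper handles by invoking the It\^o formulae of Pardoux and Gy\"ongy--Krylov together with the $\mathbf{E}_\alpha$-regularity in \eqref{eq-locsol_01}; your proof needs the identical justification and nothing more.
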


\begin{Rem}\label{rem-future} We suspect that, if $d\in \{2,3\}$, $\alpha>d/2$, then under reasonable assumptions about the noise, there exists a local maximal solution for every initial data  $\y_0=(\v_0,\d_0) \in \sH^{\alpha-1}\times \bH^{\alpha}$. We also suspect that the existence of a local solution is mainly due to the mathematical analysis. We limited ourselves to the analysis of local solution as we were not able to derive proper estimates yielding global existence. We, however, have the conjecture that under smallness condition on the initial data one should be able to prove global existence of solution; this is the case for the deterministic model, see \cite{WZZ}.   These questions will be investigated in subsequent papers.
\end{Rem}

The proofs of these two theorems are given in sections \ref{Proof-Local-Sol} and \ref{Proof-of-Sphere}, respectively.

\section{Proof of Theorem \ref{LC-Local-Sol}}\label{Proof-Local-Sol}
In order to prove the results in Theorem \ref{LC-Local-Sol}   we will use the general results proved in \cite[Theorem 5.15 and 5.16]{ZB+EH+PR-16}. For this purpose we establish several crucial estimates for the nonlinear terms in \eqref{F-LC} in the following lemmata.
\begin{lem}\label{lem-Bilinears-Est}
	Assume that $\alpha > d/2$. Then, there exist  $\delta \in [0,1)$ and $C>0$ such that  for any $\bu\in \sH^{\alpha},\bv \in \sH^{\alpha+1}$ and $\bd, \bm \in \bH^{\alpha+1}$
	\begin{align}
		& \lVert B(\bu,\bv) \rVert_{\bH^{\alpha-1}}\le C\left(\lVert \bu \rVert_{\el^\infty} \lVert \bv \rVert_{\bH^{\alpha}}+ \lVert \bu \rVert_{\bH^{\alpha-1}}  \lVert \bv \rVert_{\bH^{\alpha+1}}^{\delta} \lVert \bv \rVert^{1-\delta}_{\bH^{\alpha}}\right),\label{1st-EST}\\
		&  \lVert\tilde{B}(\bv,\bd) \rVert_{\bH^{\alpha}} \le C \lVert \bv \rVert_{\bH^{\alpha}}\lVert \bd \rVert_{\bH^{\alpha+1}}, \label{2nd-EST}\\
		&   \lVert M(\bd, \bm)\rVert_{\bH^{\alpha-1}}          \le C \lVert  \bd \rVert_{\bH^{\alpha+1}} \lVert  \bm \rVert_{\bH^{\alpha+1}}.\label{3rd-EST}
	\end{align}
	
\end{lem}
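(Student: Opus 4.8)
The plan is to prove the three estimates separately, in each case reducing the nonlinear expression to a product (or a derivative of a product) and then invoking the fractional Leibniz/commutator inequality \eqref{Comm-Est-2} together with the boundedness of the Helmholtz--Leray projector $\Pi$ on $\mathbb{H}^{s}$. The crucial analytic input throughout is the Sobolev embedding $\mathbb{H}^{\alpha}\embed \el^\infty$, valid precisely because $\alpha>d/2$; this is what lets me convert the $\el^\infty$ factors appearing in \eqref{Comm-Est-2} into $\mathbb{H}^{\alpha}$-norms whenever that is convenient.

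First I would handle \eqref{2nd-EST} and \eqref{3rd-EST}, which are the algebraically simplest. For $\tilde{B}(\bv,\bd)=\bv\cdot\nabla\bd$, I write it componentwise as a sum of products $\bv^i\,\partial_i\bd$ and apply \eqref{Comm-Est-2} with $s=\alpha$, bounding $\Vert \bv^i\Vert_{\el^\infty}\le c\Vert \bv\Vert_{\mathbb{H}^{\alpha}}$ and $\Vert \partial_i\bd\Vert_{\mathbb{H}^{\alpha}}\le \Vert \bd\Vert_{\mathbb{H}^{\alpha+1}}$ (and symmetrically), which yields exactly $\Vert\tilde{B}(\bv,\bd)\Vert_{\mathbb{H}^{\alpha}}\le C\Vert\bv\Vert_{\mathbb{H}^{\alpha}}\Vert\bd\Vert_{\mathbb{H}^{\alpha+1}}$. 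For $M(\bd,\bm)=-\Pi\nabla\cdot(\nabla\bd\odot\nabla\bm)$, I first use $\Pi\in\mathscr{L}(\mathbb{H}^{\alpha-1})$ and the fact that $\nabla\cdot$ loses one derivative, so it suffices to estimate $\Vert\nabla\bd\odot\nabla\bm\Vert_{\mathbb{H}^{\alpha}}$. Each entry is a sum of products $\partial_i\bd^k\,\partial_j\bm^k$; applying \eqref{Comm-Est-2} with $s=\alpha$ and then $\mathbb{H}^{\alpha}\embed\el^\infty$ on whichever factor sits in $\el^\infty$ gives $\Vert\nabla\bd\odot\nabla\bm\Vert_{\mathbb{H}^{\alpha}}\le C\Vert\bd\Vert_{\mathbb{H}^{\alpha+1}}\Vert\bm\Vert_{\mathbb{H}^{\alpha+1}}$, and \eqref{3rd-EST} follows.

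The main obstacle is \eqref{1st-EST}, where the point is that $\bv$ is only assumed to lie in $\sH^{\alpha+1}$ on the right but the asserted bound must be genuinely sublinear in the top norm $\Vert\bv\Vert_{\mathbb{H}^{\alpha+1}}$, carrying the exponent $\delta<1$; this is exactly the structural feature needed downstream to close the fixed-point argument (a full $\Vert\bv\Vert_{\mathbb{H}^{\alpha+1}}$ would be too strong). Writing $B(\bu,\bv)=\Pi(\bu\cdot\nabla\bv)$ and using $\Pi\in\mathscr{L}(\mathbb{H}^{\alpha-1})$, I must bound $\Vert\bu\cdot\nabla\bv\Vert_{\mathbb{H}^{\alpha-1}}$. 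The first term on the right of \eqref{1st-EST} comes from applying \eqref{Comm-Est-2} at regularity $s=\alpha-1$, with the $\el^\infty$-norm placed on $\bu$: this produces $\Vert\bu\Vert_{\el^\infty}\Vert\nabla\bv\Vert_{\mathbb{H}^{\alpha-1}}\le\Vert\bu\Vert_{\el^\infty}\Vert\bv\Vert_{\mathbb{H}^{\alpha}}$. The second, more delicate term is $\Vert\bu\Vert_{\mathbb{H}^{\alpha-1}}\Vert\nabla\bv\Vert_{\el^\infty}$, and the key trick is to avoid controlling $\Vert\nabla\bv\Vert_{\el^\infty}$ by the top norm directly. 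Instead I would interpolate: by a Gagliardo--Nirenberg/Sobolev interpolation one has $\Vert\nabla\bv\Vert_{\el^\infty}\le C\Vert\bv\Vert_{\mathbb{H}^{\alpha+1}}^{\delta}\Vert\bv\Vert_{\mathbb{H}^{\alpha}}^{1-\delta}$ for a suitable $\delta\in[0,1)$, which is possible precisely because $\alpha>d/2$ leaves room between $\mathbb{H}^{\alpha}$ and $\mathbb{H}^{\alpha+1}$ for the intermediate space controlling $\nabla\bv$ in $\el^\infty$. Inserting this gives the stated second term. The care needed is in verifying that the interpolation exponent can indeed be chosen strictly below $1$; I would record the precise embedding $\mathbb{H}^{\alpha+1-\theta}\embed \mathbb{W}^{1,\infty}$ and set $\delta=\theta$ accordingly, checking the dimensional constraint $\alpha+1-\theta>1+d/2$ admits $\theta\in[0,1)$, which again reduces to $\alpha>d/2$.
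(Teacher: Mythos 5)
Your proposal is correct and follows essentially the same route as the paper: estimates \eqref{2nd-EST} and \eqref{3rd-EST} via the product rule \eqref{Comm-Est-2} together with the algebra property of $\bH^{\alpha}$ (i.e.\ the embedding $\bH^{\alpha}\embed\el^\infty$) and the boundedness of $\Pi$, and estimate \eqref{1st-EST} by applying \eqref{Comm-Est-2} at level $\alpha-1$ and then controlling $\lVert\nabla\bv\rVert_{\el^\infty}$ through the embedding $\bH^{\alpha-\delta}\subset\el^\infty$ combined with Gagliardo--Nirenberg interpolation between $\bH^{\alpha}$ and $\bH^{\alpha+1}$, exactly as the paper does. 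Your explicit check that the interpolation exponent $\delta$ can be taken strictly below $1$ because $\alpha>d/2$ matches the paper's choice of $\delta$ with $\alpha-\delta>d/2$.
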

\begin{proof}[Proof of Lemma \ref{lem-Bilinears-Est}]
	Let $\bu\in\sH^{\alpha} , \bv \in \sH^{\alpha+1}$ and $\bd, \bm \in \bH^{\alpha+1}$. In what follows we will denote by $C$  various generic constants not depending neither on  $\bu,\bv,\bd$ nor $\bm$.  By the inequality \eqref{Comm-Est-2}, we get
	\begin{equation*}
		\lVert \bu \cdot \nabla \bv \rVert_{\bH^{\alpha-1}} \le C \left( \lVert \bu \rVert_{\el^\infty} \lVert \nabla \bv \rVert_{\bH^{\alpha-1}}+ \lVert \bu \rVert_{\bH^{\alpha-1}} \lVert \nabla \bv \rVert_{\el^\infty}\right),
	\end{equation*}
	Since $\alpha >d/2$, one can find a positive constant $\delta\in (0,1)$ such that $\alpha-\delta > d/2$. Thus, by the Sobolev embedding $\bH^{\alpha-\delta}\subset \el^\infty $  and the Gagliardo-Nirenberg inequality we have
	\begin{equation}
		\lVert \nabla \mathbf{g} \rVert_{\el^\infty}\le  C \lVert \nabla \mathbf{g} \lVert_{\bH^{\alpha-\delta}} \le  C \lVert \nabla \mathbf{g} \rVert^{1-\delta}_{\bH^{\alpha-1}} \lVert \nabla \mathbf{g} \lVert^{\delta}_{\bH^{\alpha}},
	\end{equation}
	from which    we infer that
	\begin{equation*}
		\lVert \bu \cdot \nabla \bv \rVert_{\bH^{\alpha-1}} \le C\left(\lVert \bu \rVert_{\el^\infty} \lVert \bv \rVert_{\bH^{\alpha}}+ \lVert \bu \rVert_{\bH^{\alpha-1}}  \lVert \bv \rVert_{\bH^{\alpha+1}}^{\delta} \lVert \bv \rVert^{1-\delta}_{\sH^{\alpha}}\right) .
	\end{equation*}
	The first estimate in our lemma easily follows from this last line and the fact that (as we are on the torus) the Leray-Helmhotz projection operator $\Pi$ belongs to $\mathscr{L}(\bH^{\alpha-1}, \sH^{\alpha-1})$.
	
	We now prove the second estimate. As  $\alpha>d/2$, $\sH^{\alpha}$ is an algebra and  we can easily infer that
	\begin{align*}
		\lVert \bv \cdot \nabla \bd \rVert_{\sH^{\alpha}} \le  \lVert \bv \rVert_{\bH^{\alpha}} \lVert \bd \rVert_{\bH^{\alpha+1}},
	\end{align*}
	from which  the second estimate in our lemma easily follows.
	
	Now we deal with third estimate where the nonlinear map $M$ is involved.  Since $\Pi\in \mathscr{L}(\sH^{\alpha-1})$ we get
	\begin{align*}
		\lVert M(\bd, \bm)\rVert_{\bH^{\alpha-1}}\le & C \lVert \nabla \bd \odot \nabla \bm \rVert_{\sH^{\alpha}}.
	\end{align*}
	Using an argument similar to the proof of \eqref{2nd-EST} yields  \eqref{3rd-EST}.
\end{proof}

We will also need the following lemma.
\begin{lem}\label{lem-Nonlinear-Est}
	Let $\alpha > d/2$. Then, there exists a constant $C>0$ such that  for any $\bd, \bm \in \bH^{\alpha+1}$
		\begin{equation}\label{4th-EST}
		\begin{split}
			\lVert \lvert \nabla \bd \rvert^2 \bd - \lvert \nabla \bm \rvert^2 \bm \rVert_{\h^{\alpha}}\le  C \lVert \bd -\bm \rVert_{\bH^{\alpha+1}}[ \lVert \bd \rVert_{\bH^{\alpha+1}}  +\lVert \bm \rVert_{\bH^{\alpha+1}}] \lVert \bd \rVert_{\h^{\alpha}} \\
			+C \lVert \bm \rVert^2_{\bH^{\alpha+1}} \lVert \bd -\bm \rVert_{\sH^{\alpha}}
		\end{split}
	\end{equation}
	
\end{lem}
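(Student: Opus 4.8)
The plan is to reduce the whole estimate to the Banach--algebra property of $\bH^{\alpha}$, which holds because $\alpha>d/2$, combined with the product inequality \eqref{Comm-Est-2}. The first step is to split the difference into a part that records the change in the gradient quadratic form and a part that records the change in $\bd$ itself, via the telescoping decomposition
\[
\lvert \nabla \bd\rvert^2\bd-\lvert \nabla \bm\rvert^2\bm
=\bigl(\lvert \nabla \bd\rvert^2-\lvert \nabla \bm\rvert^2\bigr)\bd
+\lvert \nabla \bm\rvert^2(\bd-\bm).
\]
I would then use the pointwise algebraic identity $\lvert \nabla \bd\rvert^2-\lvert \nabla \bm\rvert^2=\nabla(\bd-\bm):\nabla(\bd+\bm)$, where $:$ denotes the Frobenius contraction, so that the first summand becomes $\bigl[\nabla(\bd-\bm):\nabla(\bd+\bm)\bigr]\bd$.

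Before estimating, I would record the consequence of \eqref{Comm-Est-2} that, since the Sobolev embedding $\bH^{\alpha}\embed \el^{\infty}$ holds for $\alpha>d/2$, the space $\bH^{\alpha}$ is a multiplicative algebra, i.e.\ $\lVert fg\rVert_{\bH^{\alpha}}\le C\lVert f\rVert_{\bH^{\alpha}}\lVert g\rVert_{\bH^{\alpha}}$. Applying this to the first summand, together with the gradient shift $\lVert \nabla \mathbf{g}\rVert_{\bH^{\alpha}}\le \lVert \mathbf{g}\rVert_{\bH^{\alpha+1}}$, gives
\[
\bigl\lVert \bigl[\nabla(\bd-\bm):\nabla(\bd+\bm)\bigr]\bd\bigr\rVert_{\h^{\alpha}}
\le C\,\lVert \bd-\bm\rVert_{\bH^{\alpha+1}}\bigl[\lVert \bd\rVert_{\bH^{\alpha+1}}+\lVert \bm\rVert_{\bH^{\alpha+1}}\bigr]\lVert \bd\rVert_{\h^{\alpha}},
\]
which is exactly the first term on the right-hand side of \eqref{4th-EST}. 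For the second summand the same algebra estimate yields
\[
\bigl\lVert \lvert \nabla\bm\rvert^2(\bd-\bm)\bigr\rVert_{\h^{\alpha}}
\le C\,\lVert \nabla\bm\rVert_{\bH^{\alpha}}^2\,\lVert \bd-\bm\rVert_{\sH^{\alpha}}
\le C\,\lVert \bm\rVert_{\bH^{\alpha+1}}^2\,\lVert \bd-\bm\rVert_{\sH^{\alpha}},
\]
the second term in \eqref{4th-EST}. Summing the two bounds completes the argument.

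Since the quantity is tri-/bilinear and all the tools (the algebra property of $\bH^{\alpha}$ and the one-derivative shift $\bH^{\alpha+1}\to\bH^{\alpha}$) are already in hand, I do not expect a genuine obstacle. The only point that requires care is the asymmetric placement of norms in the target inequality: one must keep $\bd$ measured only in the lower-order norm $\h^{\alpha}$ in the first term, rather than in $\bH^{\alpha+1}$. This is precisely why the difference is split so that both gradients land on the factors $\bd-\bm$ and $\bd+\bm$, leaving $\bd$ itself to absorb only an $\h^{\alpha}$ norm; this lower-order control is what makes \eqref{4th-EST} usable in the subsequent fixed-point/Lipschitz estimates.
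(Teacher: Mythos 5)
Your proof is correct and follows essentially the same route as the paper: the same telescoping decomposition $(\lvert\nabla\bd\rvert^2-\lvert\nabla\bm\rvert^2)\bd+\lvert\nabla\bm\rvert^2(\bd-\bm)$ followed by the algebra property of $\bH^{\alpha}$ and the one-derivative shift. If anything, your polarization identity $\lvert\nabla\bd\rvert^2-\lvert\nabla\bm\rvert^2=\nabla(\bd-\bm):\nabla(\bd+\bm)$ handles the difference of squares more cleanly than the paper's factorization $(\lvert\nabla\bd\rvert-\lvert\nabla\bm\rvert)(\lvert\nabla\bd\rvert+\lvert\nabla\bm\rvert)$, which passes through the non-smooth modulus of a vector field and is therefore less comfortable to estimate in $\bH^{\alpha}$.
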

\begin{proof}[Proof of Lemma \ref{lem-Nonlinear-Est}]
	Let us fix  $\bd, \bm \in \bH^{\alpha+1}$. Again, since  $\bH^\alpha$ is an algebra, we easily deduce from  the inequality
	\begin{align*}
		&\lVert \lvert \nabla \bd \rvert^2 \bd - \lvert \nabla \bm \rvert^2 \bm \rVert_{\bH^{\alpha}}\\\le & \lVert [(\lvert \nabla \bd\lvert -\lvert \nabla \bm\lvert) (\lvert \nabla \bd\rvert + \lvert \nabla \bm\rvert)] \bd \rVert_{\sH^{\alpha}} + \lVert  \lvert \nabla \bm \rvert^2 (\bd-\bm)\rVert_{\sH^{\alpha}},
	\end{align*}
	the  inequality         \eqref{4th-EST}.
\end{proof}
We now are ready to embark on  the promised proof of Theorem \ref{LC-Local-Sol}.
\begin{proof}[Proof of Theorem \ref{LC-Local-Sol}]

	Since the maps $M$, $B$ and $\tilde{B}$ are bilinear, we infer from the Lemmata \ref{lem-Bilinears-Est} and \ref{lem-Nonlinear-Est} that the nonlinear term $\mathbf{F}$ defined in  \eqref{eqn-def-A-F} satisfies the following property: there exist two constants $\delta \in (0,1)$ and $C>0$ such that for any $\y_1,\y_2\in \mathbf{E}_\alpha$ we have
	\begin{equation}\label{nonlinear-F}
		\begin{split}
			\lVert \mathbf{F}(\y_1 ) -\mathbf{F}(\y_2)\rVert_{\mathbf{H}_\alpha}\le & C  \lVert \y_1 -\y_2\rVert_{\mathbf{V}_\alpha }\left(\lVert \y_1\rVert^{1-\delta}_{\mathbf{V}_\alpha} \lVert \y_2 \rVert^\delta_{\mathbf{E}_\alpha }+ \sum_{k=1}^2\Big[ \lVert \y_1 \rVert^{k}_{\mathbf{V}_\alpha} + \lVert \y_2 \rVert^{k}_{\mathbf{V}_\alpha}\Big] \right)\\
			& +
			C  \lVert \y_1 -\y_2\rVert^{1-\delta}_{\mathbf{V}_\alpha} \lVert \y_1 -\y_2\rVert^{\delta}_{\mathbf{E}_\alpha} \lVert \y_2\rVert_{\mathbf{V}_\alpha }.
		\end{split}
	\end{equation}
	From the definition \ref{eqn-G} of the map $G$ and the assumption on $\bh$ it follows  that $\mathbf{L}\in \mathscr{L}(\sH^{\alpha}\times \bH^{\alpha+1})$ from which,  along with \eqref{nonlinear-F}, we infer that $\mathbf{F}+\mathbf{L}$ satisfies Assumption \ref{assum-F} of Theorem \ref{thm_local} ( see also  \cite[Assumption 5.1]{ZB+EH+PR-16}).
	
	Because of Assumption \ref{HYPO-ST} and the fact that $G\in \mathscr{L}( \bH^{\alpha+1})$ it is clear that $\mathbf{G}$ satisfies Assumption \ref{assum-G} of Theorem \ref{thm_local}.
	
	Now, let $X_T$ be   the Banach space   \begin{equation}\label{eqn-X_T-0}
		X_T:= C([0,T];\mathbf{V}_\alpha) \cap L^2(0,T;\mathbf{E}_\alpha)
	\end{equation}
	with the norm defined by
	\begin{equation}\label{eqn-X_T-norm-0}
		\vert \bu\vert_{X_T}^2= \sup_{s \in [0,T]} \Vert \bu(s)\Vert^2_{\mathbf{V}_\alpha}+\int_0^T \vert \bu(s) \vert_{\mathbf{E}_\alpha}^2\, ds.
	\end{equation}
	It is know from \cite[Lemma 1.2]{Pardoux} or \cite[Lemma 1.5]{KP86} that the linear map  $\mathbb{S}\ast : \el^2(0,T;\h_{\alpha}) \to X_T$ defined by
	$$ (\mathbb{S}\ast f) (\,\cdot\, )= \int_0^{\,\cdot\,} \mathbb{S}(\,\cdot\,-s) f(s) ds, \quad f\in \el^2(0,T; \bH_{\alpha}), $$
	is bounded.
	
	It is also know, see \cite[Lemma 1.4]{Pardoux} or \cite[Chapitre2, Lemma 2.1]{Pardoux2}, that the linear map
	$\mathbb{S}\diamond : M^2(0,T;\mathscr{L}_2(\rK, \mathbf{V}_{\alpha}) ) \to  M^2(X_T)$  defined by
	$$ (\mathbb{S}\diamond g)(\,\cdot\,)=\int_0^{\,\cdot\,} \mathbb{S}(\,\cdot\,-s)g(s)dW(s), \;\; g\in M^2(0,T;\mathscr{L}_2(\rK, \mathbf{V}_{\alpha})),$$
	is also bounded.
	
	From the observations above, Assumption \ref{HYPO-ST} and the assumption on the initial data $\y_0$  we infer that the problem \eqref{ABSTRACT-LC} satisfies all the assumptions of Theorems \ref{thm_local} and \ref{thm_maximal-abstract} ( see also \cite[Theorem 5.15 and 5.16]{ZB+EH+PR-16}) from which we easily  complete the proof of the Theorem \ref{LC-Local-Sol}.
\end{proof}

\section{Proof of  Theorem  \ref{Thm-Sphere-cond} } \label{Proof-of-Sphere}
In this section we give the proof of the sphere constraint.

\begin{proof}[Proof of Theorem \ref{Thm-Sphere-cond}]
	The proof will be divided into two steps.
	
	Let
	$\varphi:\mathbb{R}\rightarrow [-1,0]$ be a $\mathcal{C}^\infty$ class  increasing function
	such that
	\begin{equation}
		\varphi(s) =\begin{cases}  -1 \;\text{ iff } s\in (-\infty, -2],\\
			0 \;\text{ iff } s\in [-1,+\infty).
		\end{cases}
	\end{equation}
	Let $\{\tilde{\varphi}_\ell: \ell\in \mathbb{N}\}$ and $\{\tilde{\phi}_\ell: \ell\in
	\mathbb{N}\}$ be two sequences of  function
	$\mathbb{R}$ defined by
	\begin{align}
		\tilde{\varphi}_\ell(a)=&\varphi(\ell a),\;\; a \in \mathbb{R}, \\
		\tilde{\phi}_\ell(a)=&a^2 {\varphi(\ell a)}, \;\; a \in \mathbb{R}.
	\end{align}
	We also  set	
	\begin{align}
		\varphi_\ell(\bd)=&\tilde{\varphi}_\ell (\lvert \bd \rvert^2-1) ,\; \bd \in \mathbb{R}^3,\\
		\phi_\ell(\bd)=&\tilde{\phi}_\ell(\lvert \bd \rvert^2-1), \;\bd\in
		\mathbb{R}^3.
	\end{align}
	Now, let $\alpha>\frac d2$ be a fixed number. For each $\ell \in \mathbb{N}$  we define  a function
	\begin{equation}
		\begin{split}
			\Psi_\ell&: \bH^{\alpha }\to \mathbb{R}
			\\
			\Psi_\ell(\bd)&=\lve \phi_\ell \circ \bd\rve_{\el^1} \label{regularize}
			=\int_\MO (\lvert \bd(x)\rvert^2-1)^2 [\varphi_\ell(\bd(x))] dx,\,\, \bd\in
			\bH^{\alpha}.
		\end{split}
	\end{equation}
	One can show that  since $\bH^{\alpha}\subset \el^\infty$ (as $\alpha>\frac d2$), the map   $\Psi_\ell$ is twice (Fr\'echet) differentiable\footnote{One might think that since $\Phi_\ell$ is well defined on the space $\bH^{1}$, it would also be twice differentiable 
		in $\bH^{1}$. However, for this to hold, we need to restrict it to the space $\bH^{\alpha}$ for $\alpha>\frac d2$ as in this case  $\bH^{\alpha}         \subset \el^\infty$. This is in fact related to the properties of Nemytski maps, see the papers by the first named authour \cite{Brz+Elw_2000} and \cite{Brz+Millet_2012}.}
	and its first and second derivatives satisfy,  for $\bd\in \bH^{\alpha} $  and $ \mathbf{k},\mathbf{f} \in \bH^{\alpha}$,
	\begin{equation}
		\begin{split}
			\Psi_\ell^\prime(\bd)(\mathbf{k} )= & 4\int_\MO \left(\lvert
			\bd (x)\rvert^2-1)\varphi_\ell(\bd(x)) [\bd(x)\cdot \mathbf{k}(x)]  \right)dx\\
			&\qquad 	+ 2\ell \int_\MO
			(\lvert \bd(x)\rvert^2-1)^2 \varphi^\prime (\ell (\lvert\bd(x)\rvert^2-1) ) (\bd(x)\cdot \mathbf{k}(x))dx ,
		\end{split}
		\label{Eq:1stDer}
	\end{equation}
	and
	\begin{equation}
		\label{Eq:2ndDer}
		\begin{split}
			\Psi^{\prime \prime}_\ell(\bd)(\mathbf{k},\mathbf{f})= 4\ell^2 \int_\MO \left[ (\lvert \bd(x)\rvert^2-1)^2 \varphi_\ell^{\prime\prime}(\ell(\lvert \bd(x)\rvert^2-1  )) (\bd(x)\cdot \mathbf{k}(x) ) (\bd(x) \cdot
			\mathbf{f}(x) )  \right]dx\\
			+16\ell \int_\MO \left[(\lvert \bd(x)\rvert^2-1)\varphi^\prime (\ell(\lvert \bd(x)\rvert^2 -1)) (\bd(x) \cdot \mathbf{k}(x) ) (\bd(x) \cdot
			\mathbf{f}(x) )\right]dx\\
			+ 8\int_\MO \biggl[\varphi_\ell(\bd(x))
			(\bd(x)\cdot \mathbf{k}(x) ) (\bd(x) \cdot \mathbf{f}(x)) \biggr]dx\\
			+ 2\ell \int_\MO \left[\lvert \bd(x)\rvert^2 -1)^2 \varphi^\prime (\ell (\lvert \bd(x) \rvert^2-1) )
			(\mathbf{k}(x)\cdot \mathbf{f}(x)  \right]  dx\\ +4 \int_\MO \left[\varphi_\ell(\bd(x))
			(\lvert \bd(x)\rvert^2-1) (\mathbf{k}(x)\cdot \mathbf{f}(x) )\right]dx.
		\end{split}
	\end{equation}
	In particular, if $\bd\in \bH^{\alpha}$ and $\mathbf{k},\mathbf{f} \in \bH^{\alpha}$ are such that
	\[\mathbf{k}(x)\perp \bd(x) \mbox{ and }\mathbf{f}(x)\perp \bd(x) \mbox{  for all  }x \in \MO,\]
	then 	
	\begin{equation}
		\Psi_\ell^\prime(\bd)(\mathbf{k})=0, \label{Eq:1stDerPerp}
	\end{equation}
	and
	\begin{equation}
		\begin{split}
			\Psi_\ell^{\prime\prime}(\bd)(\mathbf{k},\mathbf{f} )=& 4\int_\MO \left[(\lvert
			\bd(x) \rvert^2-1)\varphi_\ell(\bd(x)) (\mathbf{k}(x) \cdot \mathbf{f}(x) )\right] dx\\
			& \qquad +2\ell  \int_\MO \left[ (\lvert
			\bd(x) \rvert^2-1)^2 \varphi^\prime( \ell (\lvert\bd(x) \rvert^2  -1) ) ( \mathbf{k}(x) \cdot \mathbf{f}(x))  \right]dx.\label{Eq:2ndDerPerp}
		\end{split}
	\end{equation}
	Since the local solution $\bd$ given by Theorem \ref{LC-Local-Sol} satisfies the following integral equation in $\bH^{\alpha}$, for all  $t \in [0,T]$, all $m \in \mathbb{N}$, $\mathbb{P}$-a.s.
	\begin{equation*}
		\bd(t\wedge \tau_m)= \bd_0 + \int_0^{t\wedge \tau_m} (\Delta \bd(s)  + \lvert \nabla \bd(s)\rvert^2 \bd(s) -\bv(s) \cdot \nabla \bd(s)  ) ds  +\int_0^{t\wedge \tau_m}(\bd(s) \times \bh)\circ d\eta, 
	\end{equation*}
	it          	follows from  the It\^o formula,  see  \cite[Theorem I.3.3.2]{Pardoux} and \cite[Theorem 1]{GK_1982}, that  for any $m \in \mathbb{N}$,  \text{ for all } $t \in [0,T]$, $\mathbb{P}-a.s.$,
		\begin{equation*}
		\begin{split}
			\Psi_\ell(\bd)(t\wedge \tau_m)-\Psi_\ell(\bd)(0)\\=\int_0^{t\wedge \tau_m}\Psi^\prime_\ell (\bd)(s) \left(\Delta \bd  +\lvert \nabla \bd\rvert^2 \bd -\bv \cdot \nabla \bd  +\frac 12 G^2(\bd)\right)(s)ds
			\\ +\int_0^{t\wedge \tau_m}\frac 12 \Psi_\ell^{\prime\prime}(\bd)(s) (G(\bd)(s, G(\bd)(s) ds.
		\end{split}
	\end{equation*}
	Note that the  stochastic integral vanishes because $G(\bd(s,x))\perp\bd(s,x)$ for all $s \in [0,\tau)$ and $x \in \MO$.    	
	
	 Since $G^2(\bd)=(\bd \times \bh )\times \bh$ and $G(\bd)=\bd \times \bh $, we infer from \eqref{Eq:1stDer} and the identity
	$$-\lvert a \times b\rvert^2_{\mathbb{R}^3}=a\cdot \left((a\times
	b)\times b\right), a, b \in \mathbb{R}^3,$$  that
	\begin{equation*}
		\begin{split}
			\Psi^\prime(\bd)(G^2(\bd) )=-2\ell \int_\MO (\lvert\bd(x) \rvert^2-1 ) \varphi^\prime(\ell(\lvert \bd(x)\rvert^2-1 ) ) \lvert G(\bd(x))\rvert^2 dx\\
			-4 \int_\MO (\lvert \bd (x) \rvert^2-1)\varphi_\ell (\bd(x))\lvert G(\bd(x))\rvert^2 dx,
		\end{split}
	\end{equation*}
	which along with  the fact that $G(\bd(x)) \perp \bd(x)$ for any $x \in \MO$  and   \eqref{Eq:2ndDerPerp} we infer that
	\begin{equation*}
		\frac 1 2 \Psi_\ell^{\prime\prime}(G(\bd), G(\bd))+\frac 12
		\Psi_\ell^{\prime}(G^2(\bd))=0.
	\end{equation*}
	Hence, for every $m \in \mathbb{N}$, \text{ for all } $t \in [0,T]$, $\mathbb{P}$-a.s.,
	{\small\begin{equation}\label{approx}
		\Psi_\ell(\bd(t\wedge \tau_m))-\Psi_\ell(\bd(0))=\int_0^{t\wedge \tau_m}\Psi^\prime_\ell (\bd(s)) \left(\Delta \bd(s)  + \lvert \nabla \bd(s)\rvert^2 \bd(s) -\bv(s) \cdot \nabla \bd(s)  \right) ds.
	\end{equation}}Now, observe that from the assumptions on the function  $\varphi$ and the definition of the sequence $\tilde{\phi}_\ell, \ell \in \mathbb{N}$ we infer  that, with $a_{-}:=\max(-a,0)$, for any $a\in \mathbb{R}$,
	\begin{equation}\label{Eq:Limphil}
		\tilde{\phi}_\ell(a) \to  (a_-)^2  \text{ and }  \ell \varphi^\prime(\ell a) \to 0 \text{ as } \ell \to \infty.
	\end{equation}
	Observe also that there exists a constant $C>0$ such that for all $\ell \in \mathbb{N}$ and $a\in \mathbb{R}$
	\begin{equation}\label{Eq:Boundphil}
		\lvert \tilde{\phi}_\ell(a)\rvert \le C a^2 \text{ and } \lvert \ell \varphi^\prime(\ell a)\rvert\le C \vert a \rvert.
	\end{equation}
	Therefore we  infer from \eqref{Eq:Limphil}, \eqref{Eq:Boundphil} and the Lebesgue Dominated Convergence Theorem that for $\bd\in \bH^{\alpha}, \mathbf{k}\in \bH^{\alpha}$
	\begin{align*}
		\lim_{\ell \rightarrow \infty}\Psi_\ell(\bd)=&\lve \left(\lvert
		\bd\rvert^2-1\right)_{-}\rve^2,\\
		\lim_{\ell \rightarrow
			\infty}\Psi^\prime_\ell(\bd)(\mathbf{k})=&4 \int_\MO[ \left(\lvert
		\bd(x)\rvert^2-1\right)_{-}(\bd(x)\cdot \mathbf{k}(x))] \,dx.
	\end{align*}
	Hence, setting $y(t)=\lve \left(\lvert \bd(t)\rvert^2-1\right)_{-}\rve_{\el^2}^2$ and 
	letting $\ell \rightarrow \infty$ in
	\eqref{approx} we obtain that for every $m \in \mathbb{N}$, \text{ for all } $t \in [0,T]$, $\mathbb{P}$-a.s.,
	{\small\begin{equation*}
		\begin{split}
			y(t\wedge \tau_m)-y(0)+4\int_0^{t\wedge \tau_m} \biggl(\int_\MO \biggl[-\Delta \bd(s,x)  - \lvert \nabla \bd(s,x)\rvert^2 \bd(s,x) +\bv(s,x) \cdot \nabla \bd(s,x)  \biggr]\\
			\times \biggl[\bd(s,x)
			\left(\lvert \bd(s,x)\rvert^2-1\right)_{-}\biggr] dx \biggr)ds=0.
		\end{split}
	\end{equation*}}Using the identities
	\begin{align}
		\nabla\lvert \bd \rvert^2= 2 \nabla \bd \bd, \label{Indentity-nabla}\\
		\Delta \lvert \bd \rvert^2= 2\Delta \bd \cdot \bd + 2 \lvert \nabla \bd \rvert^2,\label{Identity-laplace}
	\end{align}
	we deduce that for every $m \in \mathbb{N}$, \text{ for all } $t \in [0,T]$, $\mathbb{P}$-a.s.,
	\begin{equation*}
		\begin{split}
			y(t\wedge \tau_m)-y(0)+2\int_0^{t\wedge \tau_m} \biggl(\int_\MO \biggl[-\Delta \lvert \bd(s,x)\rvert^2   - 2 \lvert \nabla \bd(s,x)\rvert^2 (\lvert \bd(s,x) \rvert^2-1) \\
			\qquad \qquad +\bv(s,x) \cdot \nabla \lvert \bd(s,x)\rvert^2  \biggr]\biggl[
			\left(\lvert \bd(s,x)\rvert^2-1\right)_{-}\biggr] dx \biggr)ds=0.
		\end{split}
	\end{equation*}
	Now observe that from the definition of $\zeta:= \left(\lvert \bd\rvert^2-1\right)_{-}$ we have, for $\bd \in \bH^{\alpha+2}$,
	\begin{equation*}
		\begin{split}
			&	\int_\MO \biggl[-\Delta \lvert \bd(x)\rvert^2   - 2 \lvert \nabla \bd(x)\rvert^2 (\lvert \bd(x) \rvert^2-1)\biggr]\zeta(x) dx\\
			&\qquad = -\int_\MO \left(\Delta\zeta(x) \cdot \zeta(x)- 2\mathds{1}_{\lvert \bd(x) \rvert^2 \le 1} \lvert \nabla \bd(x \rvert^2 \zeta^2(x)  \right)dx \\
			&\qquad \qquad	\ge \int_\MO \lvert \nabla \zeta(x) \rvert^2 dx.
		\end{split}
	\end{equation*}
	Observe also that since $\nabla \cdot \bv=0$ we have\footnote{Note that by \cite[Exercise 7.1.5, p 283]{Atkinson}, $\zeta \in H^1$ if
		$\bd\in \bH^1$.}
	\begin{equation*}
		\int_\MO \bv(x) \cdot \nabla\lvert \bd(x) \rvert^2 \zeta(x) dx=\int_\MO \bv(x) \cdot \nabla \zeta(x) \zeta(x) dx=0.
	\end{equation*}
	\noindent Bearing in mind the two remarks above, we infer that
	for every $m \in \mathbb{N}$,  $y(t\wedge \tau_m)$ satisfies \text{ for all } $t \in [0,T]$, $\mathbb{P}$-a.s.,
	\begin{equation*}
		y(t\wedge \tau_m)+2\int_0^{t\wedge \tau_m}  \lvert \nabla \zeta (s)\rvert_{{L^2}}^2 ds \le y(0).
	\end{equation*}
	Since the second term in the left hand side of the above
	inequality is positive and $y(0)=\lve (\lvert \bd_0\rvert^2-1)_{-}\rve^2$ and by
	assumption $\lvert \bd_0(x,\omega)\rvert^2=1$ for  all $x\in  \MO$ and $\omega\in \Omega$  we
	deduce  that, for every $m \in \mathbb{N}$,  \text{ for all } $t \in [0,T]$, $\mathbb{P}$-a.s.,
	\begin{equation*}
		y(t\wedge \tau_m)=0.
	\end{equation*}
	Since $\bH^{\alpha+1}\subset C^1(\MO)$ as $\alpha>\frac d2$,  we infer that
	for all $m \in \mathbb{N}$, $t\in  [0,T]$, $\mathbb{P}$-a.s.
	\begin{equation}\label{eqn-minus} (\lvert \bd(t \wedge \tau_m,x, \omega)\rvert^2 -1 )_{-}{=0} \mbox{ for all $x\in \MO$.}
	\end{equation}
	Thus, to complete the proof it is sufficient  to show that for all $m \in \mathbb{N}$, for all $t\in  [0,T]$ we have , $\mathbb{P}$-a.s.
	\begin{equation}\label{eqn-plus}
		(\lvert \bd(t\wedge\tau_m,x) \rvert^2-1)_{+}=0 \mbox{  for all $x\in \MO$.}
	\end{equation}
	For this purpose we set
	\begin{eqnarray*}\xi(t,x)&:=& \bigl( \lvert \bd (t,x) \rvert^2-1\bigr)_{+}, \;\;(t,x) \in [0,\tau)\times \MO,\\
		z(t)&=&\lVert \xi (t)\rVert^2_{\el^2}, \;\;\; t\in[0,T],
	\end{eqnarray*}
	and construct a sequence of functions $\Psi_\ell$ very similar to the one defined in
	\eqref{regularize}. First let us define an increasing function  $\varphi: \mathbb{R} \to [0,1]$ 
	belonging to     	$\mathcal{C}^\infty$ satisfying
	\begin{equation}
		\varphi(s) = \begin{cases}1 \text{ iff } s\in [2,\infty),\\
			0 \text{ iff } s\in (-\infty,1].
	\end{cases}  	\end{equation}
	Now, we replace in    the definition of $\Psi_\ell$ given by \eqref{regularize} the old function $\varphi_l$ by the function defined above.
	With this new definition we can show by arguing as before      that
	for $\bd\in \h^2$ and $ \bk \in \el^2$ we have
	\begin{align*}
		\lim_{m\rightarrow \infty}\Psi_\ell(\bd)=&\lve \left(\lvert
		\bd\rvert^2-1\right)_{+}\rve_{{L^2}}^2,\\
		\lim_{m\rightarrow
			\infty}\Psi^\prime_\ell(\bd)(\bk)=&4 \int_\MO[ \left(\lvert
		\bd(x)\rvert^2-1\right)_{+}\bd(x)\cdot \bk(x)] \,dx,
	\end{align*}
	and, for every $m \in \mathbb{N}$, \text{ for all } $t \in [0,T]$, $\mathbb{P}$-a.s.,
	{\small\begin{equation}\label{eqn-5.21}
		z(t\wedge \tau_m)-z(0)+2\int_0^{t\wedge \tau_m} \int_\MO \lvert \nabla \xi(s,x) \rvert^2 dx ds-4 \int_0^{t\wedge \tau_m } \int_\MO \lvert \nabla \bd(s,x) \rvert^2\lvert \xi(s,x)\rvert^2 dx ds =0.
	\end{equation}}Since the third term in the left hand side of the above identity is negative we cannot neglect this term. Before proceeding with the proof we observe that from the assumption $\alpha>d/2 $ and \eqref{eq-locsol_01} we infer that for any $\eps>0$ there exists a constant $N>0$ such that
	\begin{eqnarray*}
		\mathbb{P}(\Omega_{m,N})& \geq& 1-\eps, \;\;\; \mbox{ where}\\
		\Omega_{m,N} &=&\bigl\{\omega \in \Omega:\; \sup_{s\in [0,t\wedge \tau_m]}\lVert \nabla \bd(s)  \rVert_{\el^\infty}\le N \bigr\}.
	\end{eqnarray*}
	Let us observe that for all $m\in \mathbb{N}$, in view of \eqref{eqn-5.21},    we have  on $\Omega_{m,N}$
	\begin{equation}
		\begin{split}
			z(t\wedge \tau_m)-z(0)+2\int_0^{t\wedge \tau_m} \int_\MO \lvert \nabla \xi(s,x) \rvert^2 dx ds&\leq 4N^2 \int_0^{t\wedge \tau_m }\int_\MO \vert \xi(s,x)\vert^2 dx ds\\
			& \leq 4N^2 \int_0^{t\wedge \tau_m } z(s) ds.
		\end{split}
	\end{equation}
	Taking the expectation (over the set $\Omega_{m,N}$), because for a nonnegative function $z$,\break $\int_0^{t \wedge \tau} z(s)\,ds \leq \int_0^{t } z(s\wedge \tau)\,ds$,  from the above inequality we get
	\begin{equation}
		\mathbb{E}\bigl[ z(t\wedge \tau_m)1_{\Omega_{m,N}} \bigr]
		\leq \mathbb{E}\bigl[ z(0)  1_{\Omega_{m,N}} \bigr] + 4N^2 \int_0^{t } \mathbb{E}\bigl[ z(s \wedge \tau_m) 1_{\Omega_{m,N}}\bigr] \, ds.
	\end{equation}
	Applying  the Gronwall Lemma we infer that
	\begin{equation*}
		\mathbb{E}\bigl[ z(t\wedge \tau_m)1_{\Omega_{m,N}} \bigr]   \le    \mathbb{E}\bigl[ z(t\wedge \tau_m)1_{\Omega_{m,N}} \bigr]  e^{4N^2 T}=0, \;\; t \in [0,T].
	\end{equation*}
	Hence we infer that  $1_{\Omega_{m,N}}(\lvert \bd (t\wedge \tau_m)\rvert^2-1)_{+}=0$ for every $t \in [0,T]$, $\mathbb{P}$-a.s. and therefore
	we deduce that  for every $t \in [0,T]$ and
	for every $\eps>0$
	\begin{equation*}
		\mathbb{P}\left((\lvert\bd t\wedge \tau_m \rvert^2-1)_{+}=0\right)\ge 1-\eps.
	\end{equation*}
	From this last estimate and the first part of the proof infer that for all $t\in  [0,T]$, $m \in \mathbb{N}$,  $\mathbb{P}$-a.s.
	\begin{equation*}
		(\lvert \bd(t\wedge \tau_m,x,\omega) \rvert-1)_{+}=(\lvert \bd(t\wedge\tau_m,x,\omega) \rvert-1)_{-}=0 \text{ for all $x\in \MO$,}
	\end{equation*}
	which implies \eqref{Sphere-contraint}.
\end{proof}

\appendix

\section{Local strong solution for an abstract stochastic evolution equation}\label{ABST-STRONG}
The goal of this section is to recall general results about the existence of a local and maximal solution to an abstract stochastic partial differential equation with locally
Lipschitz continuous coefficients. These results were proved in  \cite{ZB+EH+PR-16} utilising some
truncation and fixed point methods. The proofs are highly technical, and hence we refer the reader to \cite{ZB+EH+PR-16} for the details.

To start with let us fix some notations and assumptions.
Let $V$,  $E$ and $H$ be separable Banach spaces such that
$E\subset V$ continuously. We denote the norm in $V$ by $\Vert
\cdot \Vert$ and we put
\begin{equation}\label{eqn-X_T}
	X_T:= C([0,T];V) \cap L^2(0,T;E)
\end{equation}
with the norm $\vert \cdot \vert_{X_T}$ satisfying
\begin{equation}\label{eqn-X_T-norm}
	\vert u\vert_{X_T}^2= \sup_{s \in [0,T]} \Vert u(s)\Vert^2+\int_0^T \vert u(s) \vert_E^2\, ds.
\end{equation}
Let $F$ and $G$ be two nonlinear mappings satisfying the following
sets of conditions.
\begin{assum}\label{assum-F}
	Suppose that $F: E \to H$ is such that
	$F(0)=0$ and there exist $p ,q \geq 1$, $\alpha, \gamma \in [0,1)$ and $C>0$
	such that
	\begin{equation}\label{eqn-local Lipschitz-F}
		\begin{split}
			\vert F(y)-F(x) \vert_H \leq C \Big[ \Vert y-x\Vert \Vert
			y\Vert^{p-\alpha} \vert y\vert_E^\alpha + \vert y-x\vert_E^\alpha
			\Vert y-x\Vert^{1-\alpha} \Vert x\Vert^p\Big]\\
			+ C \Big[ \Vert y-x\Vert \Vert
			y\Vert^{q-\gamma} \vert y\vert_E^\gamma + \vert y-x\vert_E^\gamma
			\Vert y-x\Vert^{1-\gamma} \Vert x\Vert^q\Big],
		\end{split}
	\end{equation}
	for any $x, y\in E$.
\end{assum}
Let $\rK$ be a separable Hilbert space and $\mathscr{L}_2(\rK, V)$ the space of Hilbert-Schmidt operators from $\rK$ onto $V$. For the sake of simplicity we  denote by $\lVert \cdot \rVert_{\mathscr{L}_2}$ the norm in $\mathscr{L}_2(\rK, V)$.
\begin{assum}\label{assum-G}
	Assume that $G: E \to \mathscr{L}_2(\rK,V)$  such that $G(0)=0$ and there exists $k
	\geq 1$, $\beta \in [0,1)$ and $C_G>0$ such that
	\begin{equation}\label{eqn-local Lipschitz-G}
		\Vert G(y)-G(x) \Vert_{\mathscr{L}_2} \leq C_G \Big[ \Vert y-x\Vert \Vert
		y\Vert^{k-\beta} \vert y\vert_E^\beta + \vert y-x\vert_E^\beta
		\Vert y-x\Vert^{1-\beta} \Vert x\Vert^k\Big],
	\end{equation}
	for any $x, y\in E$.
\end{assum}
Let $(\Omega,
\mathcal{F}, \mathbb{P})$ be a complete probability space equipped
with a filtration $\mathbb{F}=\{\mathcal{F}_t: t\geq 0\}$
satisfying the usual condition.   By $M^2(X_T)$ we denote
the space of all progressively measurable $E$-valued processes
whose trajectories belong to $X_T$ almost surely endowed with a
norm $\vert \cdot \vert_{M^2(X_T)}$ satisfying
\begin{equation}\label{eqn-M^2X_T}
	\vert u \vert_{M^2(X_T)}^2 = \mathbb{E}\Big[ \sup_{s \in [0,T]}
	\Vert u(s)\Vert^2+\int_0^T \vert u(s) \vert_E^2\, ds\Big].
\end{equation}
Let us also formulate the following assumptions.
\begin{assum}\label{assum-01}
	Suppose that the embeddings $E\subset V \subset H$ are continuous. Consider (for
	simplicity) a one-dimensional Wiener process $W=\{W(t):t\ge 0\}$, \textit{i.e.}, $\rK=\mathbb{R}$.

	Assume that $\{S(t):t\in [0,\infty)\}$, is a family of bounded linear operators on the space $H$ such that there exists two positive constants $C_1$ and $C_2$ with the following properties:
	\\
	(i) For every $T>0$ and every  $f\in L^2(0,T;H)$ a function
	$u=S\ast f$ defined by
	\[ u(t)=\int_0^T S(t-r) f(r)\, dt, \;\; t \in [0,T],\]
	belongs to $X_T$ and
	\begin{equation}\label{ineq-dc}
		\vert u \vert _{X_T}\leq C_1 \vert f \vert _{L^2(0,T;H)}.
	\end{equation}
	(ii) For every $T>0$ and every process  $\xi\in M^2(0,T;\mathscr{L}_2(\rK, V))$ a
	process $u=S \diamond \xi$ defined by
	\[ u(t)=\int_0^T S(t-r) \xi(r)\, dW(r), \;\; t \in [0,T]\]
	belongs to $M^2(X_T)$ and
	\begin{equation*}
		\vert u \vert _{M^2(X_T)}\leq C_2 \vert \xi \vert _{M^2(0,T;\mathscr{L}_2(\rK, V))}.
	\end{equation*}
	(iii) For every $T>0$ and every $u_0\in V$, a function $u=Su_0$
	defined by
	\[ u(t)= S(t)u_0,  \;\; t \in [0,T]\]
	belongs to $X_T$. Moreover, for every $T_0>0$ there exist $C_0>0$
	such that for all $T\in (0,T_0]$,
	\begin{equation}\label{ineq-dc-2}
		\vert u \vert _{X_T}\leq C_0 \Vert u_0 \Vert.
	\end{equation}
\end{assum}
Now let us consider a semigroup $\{S(t):t\in [0,\infty)\}$ as above
and the abstract SPDEs
\begin{equation}\label{ABS-SPDE-1}
	u(t)=S(t)u_0+\int_0^t S(t-s) F(u(s)) ds+\int_0^t S(t-s) G(u(s))
	dW(s),\;\; \mbox{ for any }t>0,
\end{equation}
which is a mild version of the problem
\begin{equation}\label{ABS-SPDE-strong}
	\left\{\begin{array}{rl} du(t)&= Au(t)\,dt+  F\big(u(t)\big)\, dt+
		G\big(u(t)\big)
		dW(t),\;\;t>0,\\
		u(0)&=u_0.
	\end{array}
	\right.
\end{equation}
Here $A$ is the infinitesimal generator of the semigroup $\{S(t):t\ge 0\}$.

We will not recall the definitions of local and maximal solutions since they are  the same as the ones introduced definition \ref{def-local solution} and definition \ref{def-maximal solution}.  We directly give the main theorems that are of interest to us. The first one is about the existence and uniqueness of a local solution and a probabilistic lower bound of the solution's lifespan.
\begin{thm}\label{thm_local} Suppose that Assumption \ref{assum-F}, Assumption \ref{assum-G}, and Assumption \ref{assum-01} hold. Then for every
	$\mathcal{F}_{0}$-measurable $V$-valued square integrable random
	variable $u_0$    there exists  a  local process $u=\big(u(t),
	t\in[0,T_1) \big) $  which is the
	unique local mild solution to our  problem.
	Moreover,  given $R>0$ and  $\varepsilon >0$ there exists
	a stopping time $\tau(\varepsilon,R)>0$,  such that for every
	$\mathcal{F}_0$-measurable $V$-valued random variable $u_0$
	satisfying  $\mathbb{E}\Vert u_0 \Vert^{2} \leq R^{2}$, one has
	\[{\mathbb P}\big(T_1\geq \tau(\varepsilon,R)\big) \geq
	1-\varepsilon.\]
\end{thm}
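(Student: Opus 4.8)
The plan is to follow the classical truncation--plus--fixed--point scheme, working throughout in the Banach space $M^2(X_T)$. Since $F$ and $G$ are only locally Lipschitz --- the estimates \eqref{eqn-local Lipschitz-F} and \eqref{eqn-local Lipschitz-G} carry powers of the norms --- I would first \emph{globalise} them by cutting off along the running norm. Fix a smooth $\theta:[0,\infty)\to[0,1]$ with $\theta\equiv 1$ on $[0,1]$ and $\theta\equiv 0$ on $[2,\infty)$, put $\theta_n(r)=\theta(r/n)$, and for a process $u\in M^2(X_T)$ set $F_n(u)(s)=\theta_n(\vert u\vert_{X_s})F(u(s))$ and $G_n(u)(s)=\theta_n(\vert u\vert_{X_s})G(u(s))$, where $\vert u\vert_{X_s}^2=\sup_{r\le s}\Vert u(r)\Vert^2+\int_0^s\vert u(r)\vert_E^2\,dr$ is the nondecreasing truncated norm. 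The decisive feature is that on the support of the cut--off one has $\Vert u(s)\Vert\le 2n$, so the factors $\Vert u\Vert^{p-\alpha}$, etc., in Assumptions \ref{assum-F}--\ref{assum-G} are dominated by constants depending only on $n$, while the remaining $\vert\cdot\vert_E$--factors, carrying the exponents $\alpha,\gamma,\beta\in[0,1)$, are integrated in time and produce, via H\"older's inequality, a gain of a positive power of $T$.

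Second, I would define the map $\Phi_n(u)(t)=S(t)u_0+(S\ast F_n(u))(t)+(S\diamond G_n(u))(t)$ and show it is a contraction on $M^2(X_{T})$ for $T=T(n)$ small. Boundedness into $M^2(X_T)$ and the contraction both reduce, by Assumption \ref{assum-01}(i)--(iii), to estimating $\vert F_n(u)-F_n(v)\vert_{M^2(0,T;H)}$ and $\vert G_n(u)-G_n(v)\vert_{M^2(0,T;\mathscr{L}_2)}$. Splitting $F_n(u)-F_n(v)=\theta_n(\vert u\vert_{X_s})(F(u)-F(v))+(\theta_n(\vert u\vert_{X_s})-\theta_n(\vert v\vert_{X_s}))F(v)$ (and likewise for $G_n$), the first summand is controlled by \eqref{eqn-local Lipschitz-F}, the second by the Lipschitz continuity of $\theta_n$ in the running norm; both yield $\le C_n\,T^{\kappa}\,\vert u-v\vert_{M^2(X_T)}$ for some $\kappa>0$ and a constant $C_n$. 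Choosing $T(n)$ so that $(C_1^2+C_2^2)C_n T^{\kappa}<1/2$ gives a unique fixed point $u_n\in M^2(X_{T(n)})$; gluing finitely many such intervals extends $u_n$ to a unique solution of the truncated equation on $[0,T]$ for every horizon $T$.

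Third, I would set $\tau_n=\inf\{t\ge 0:\vert u_n\vert_{X_t}\ge n\}$. On $[0,\tau_n)$ the cut--off equals $1$, so $u_n$ solves the original equation \eqref{ABS-SPDE-1} there. Uniqueness of the truncated fixed points, together with the locality of the equation, shows that for $m\le n$ the processes $u_m$ and $u_n$ coincide on $[0,\tau_m)$ and that $\tau_m\le\tau_n$; hence $(\tau_n)$ is increasing, $T_1:=\lim_n\tau_n$ is an accessible stopping time, and the consistent family $(u_n)$ glues to an admissible process $u$ on $[0,T_1)$ which is the required unique local mild solution. Uniqueness in the class of local solutions then follows by comparing any two solutions up to the minimum of their truncation times and invoking the same contraction estimate.

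Finally, for the lower bound on the lifespan I would run the a priori estimate for the solution stopped at $\tau_n$. Using Assumption \ref{assum-01}, the growth bounds obtained from \eqref{eqn-local Lipschitz-F}--\eqref{eqn-local Lipschitz-G} by taking $x=0$ (recall $F(0)=G(0)=0$), and writing, for $0<\delta\le T$, $N_n:=\vert u\vert_{X_{\delta\wedge\tau_n}}\le n$, the same H\"older gain yields
\[
\mathbb{E}\,N_n^2\le 3C_0^2\,\mathbb{E}\Vert u_0\Vert^2+C\big(\delta^{1-\alpha}n^{2p}+\delta^{1-\gamma}n^{2q}+\delta^{1-\beta}n^{2k}\big)\,\mathbb{E}\,N_n^2 .
\]
Choosing $\delta$ small enough that $C(\delta^{1-\alpha}n^{2p}+\delta^{1-\gamma}n^{2q}+\delta^{1-\beta}n^{2k})\le 1/2$, the last term is absorbed and $\mathbb{E}\,N_n^2\le 6C_0^2R^2$ whenever $\mathbb{E}\Vert u_0\Vert^2\le R^2$. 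Since $\{\tau_n\le\delta\}\subseteq\{N_n\ge n\}$ and $T_1\ge\tau_n$, Chebyshev's inequality gives $\mathbb{P}(T_1<\delta)\le\mathbb{P}(\tau_n\le\delta)\le\mathbb{E}\,N_n^2/n^2\le 6C_0^2R^2/n^2$. I would first pick $n$ so large that $6C_0^2R^2/n^2\le\varepsilon$, and then $\tau(\varepsilon,R):=\delta$ small enough (depending on this $n$) for the absorption to hold; this proves $\mathbb{P}(T_1\ge\tau(\varepsilon,R))\ge 1-\varepsilon$. The main obstacle throughout is the contraction and a priori estimates of the nonlinear terms: one must exploit the specific splitting of \eqref{eqn-local Lipschitz-F}--\eqref{eqn-local Lipschitz-G} between the $C([0,T];V)$ and $L^2(0,T;E)$ parts of the $X_T$--norm so that the fractional $E$--exponents $\alpha,\gamma,\beta<1$ convert into a genuine positive power of $\delta$, and to carry the stopping time through the mild formulation --- where it appears inside the semigroup --- without losing this gain.
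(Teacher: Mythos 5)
Your proposal is correct and follows essentially the same route as the paper: the appendix explicitly states that Theorems \ref{thm_local} and \ref{thm_maximal-abstract} were proved in \cite[Section 5]{ZB+EH+PR-16} ``utilising some truncation and fixed point methods,'' and does not reproduce the argument, deferring entirely to that reference. Your outline --- cutting off along the running $X_T$-norm, a Banach fixed point in $M^2(X_T)$ exploiting the fractional $E$-exponents to gain a power of $T$ via Assumption \ref{assum-01}, gluing along the exit times $\tau_n$, and the Chebyshev/absorption argument for the lower bound on the lifespan --- is precisely that scheme.
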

The next result is about the existence and uniqueness of a maximal solution and the characterization of its lifespan.
\begin{thm}\label{thm_maximal-abstract}
	For every      $u_0\in L^2(\Omega,\mathcal{F}_0,V)$,
	the process $u=(u(t)\, ,\, t<  \tau_\infty) $ defined above
	is the unique local maximal solution to our equation.
	Moreover,
	$ {\mathbb P }\big(\{\tau_\infty <\infty\} \cap \{\sup_{
		t<\tau_\infty} |u(t)|_{V}<\infty \}\big)=0$ and on $\{
	\tau_\infty<\infty \}$, $\limsup_{t\to \tau_\infty} |u(t)|_{V} =
	+\infty$ a.s.
\end{thm}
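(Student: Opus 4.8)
The plan is to construct the maximal solution by amalgamating all local solutions, to deduce uniqueness from pathwise uniqueness of the local problem, and then to obtain the blow-up alternative by a contradiction argument: if the lifetime were finite while the $V$-norm stayed bounded, the solution could be restarted and extended, violating maximality.

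First I would use Theorem \ref{thm_local} to guarantee that the family $\mathcal{LS}$ of local mild solutions of \eqref{ABS-SPDE-1} with datum $u_0$ is non-empty. By the Remark following Definition \ref{def-maximal solution}, $(\mathcal{LS},\preceq)$ is a partially ordered set in which every non-empty chain admits a least upper bound, this being exactly Elworthy's Amalgamation Lemma \cite[Lemmata III 6A, 6B]{Elw_1982}. The Kuratowski--Zorn Lemma then supplies a maximal element $(u,\tau_\infty)\in(\mathcal{LS},\preceq)$, which is by definition \emph{the} maximal local solution, and I would fix an approximating sequence $(\tau_m)_{m\in\mathbb{N}}$ of finite stopping times with $\tau_m\nearrow\tau_\infty$. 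Uniqueness follows from the local pathwise uniqueness in Theorem \ref{thm_local}: given two maximal solutions $(u,\tau_\infty)$ and $(v,\sigma_\infty)$, they coincide on $[0,\tau_\infty\wedge\sigma_\infty)$, so amalgamating them yields a common upper bound $(w,\tau_\infty\vee\sigma_\infty)$; maximality of each element then forbids a strictly larger bound, forcing $\tau_\infty=\sigma_\infty$ a.s.\ and $u\sim v$.

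For the blow-up alternative I would argue by contradiction. Suppose the event
\[
  B:=\{\tau_\infty<\infty\}\cap\Big\{\sup_{t<\tau_\infty}\|u(t)\|_V<\infty\Big\}
\]
has positive probability; then $\mathbb{P}(B_{R,T})>0$ for suitable $R,T>0$, where $B_{R,T}:=\{\tau_\infty\le T\}\cap\{\sup_{t<\tau_\infty}\|u(t)\|_V\le R\}$. The key step is to show that on $B_{R,T}$ the trajectory extends continuously to $[0,\tau_\infty]$ in $V$. Since $F(0)=0$ and $G(0)=0$, Assumptions \ref{assum-F} and \ref{assum-G} give pointwise bounds $|F(u(s))|_H\le C(R)(|u(s)|_E^{\alpha}+|u(s)|_E^{\gamma})$ and $\|G(u(s))\|_{\mathscr{L}_2}\le C(R)\,|u(s)|_E^{\beta}$ on $\{\|u(s)\|_V\le R\}$; because $\alpha,\gamma,\beta\in[0,1)$ and the local solution carries the a priori integrability $\int_0^{t\wedge\tau_m}|u(s)|_E^2\,ds<\infty$ (the abstract analogue of \eqref{eq-locsol_01}), Jensen's inequality yields $F(u)\in L^2(0,\tau_\infty;H)$ and $G(u)\in M^2(0,\tau_\infty;\mathscr{L}_2(\rK,V))$ on $B_{R,T}$. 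The boundedness of $S\ast$, $S\diamond$ and $Su_0$ into $X_T=C([0,T];V)\cap L^2(0,T;E)$ from Assumption \ref{assum-01} then makes each term on the right-hand side of \eqref{ABS-SPDE-1} a.s.\ continuous in $V$ on the closed interval $[0,\tau_\infty]$, so $u(\tau_\infty^-):=\lim_{t\uparrow\tau_\infty}u(t)$ exists in $V$, is $\mathcal{F}_{\tau_\infty}$-measurable, and is square-integrable on $B_{R,T}$. With $\widehat{W}(t):=W(\tau_\infty+t)-W(\tau_\infty)$, a cylindrical Wiener process for the shifted filtration, Theorem \ref{thm_local} applied with datum $u(\tau_\infty^-)\mathbf{1}_{B_{R,T}}$ (extended by $0$ off $B_{R,T}$, which is admissible since $F(0)=0=G(0)$) produces a strictly positive stopping time $\delta$ and a local solution on $[\tau_\infty,\tau_\infty+\delta)$. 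Splicing it onto $u|_{[0,\tau_\infty)}$ gives a local solution whose lifetime exceeds $\tau_\infty$ on a set of positive probability, contradicting maximality. Hence $\mathbb{P}(B)=0$, which is the first assertion; and since $u\in C([0,\tau_m];V)$ for every $m$, an infinite supremum over $[0,\tau_\infty)$ can only arise from blow-up at the endpoint, so $\limsup_{t\uparrow\tau_\infty}\|u(t)\|_V=+\infty$ a.s.\ on $\{\tau_\infty<\infty\}$, the second assertion.

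The main obstacle will be the continuous extension of $u$ at the blow-up time on the bounded event: one must convert the $V$-norm bound on $u$ into genuine time-integrability of the nonlinearities in the spaces where the convolution estimates of Assumption \ref{assum-01} operate—$F$ valued in $H$ and $G$ valued in $\mathscr{L}_2(\rK,V)$—while the only a priori control is the $V$-bound together with square-integrability of $|u|_E$, so that closing the argument hinges on the strictly sub-linear exponents $\alpha,\gamma,\beta\in[0,1)$. Equally delicate are the measurability and localisation issues: defining $u(\tau_\infty^-)$ as an $\mathcal{F}_{\tau_\infty}$-measurable variable on the random set $B_{R,T}$, restarting at the random time through the shifted Wiener process, and verifying that the spliced process is again admissible and adapted in the sense of Definition \ref{def-local solution}. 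These technical points are precisely those carried out in \cite{ZB+EH+PR-16}.
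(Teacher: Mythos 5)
Your construction of the maximal solution (Zorn's lemma via the amalgamation property), the uniqueness argument, and the final passage from $\mathbb{P}(B)=0$ to the $\limsup$ statement are sound. The genuine gap is in the blow-up argument, at the step where you extend $u$ continuously to $[0,\tau_\infty]$ on $B_{R,T}$. That extension rests on the claim that $F(u)\in L^2(0,\tau_\infty;H)$ and $G(u)\in M^2(0,\tau_\infty;\mathscr{L}_2(\rK,V))$, which you derive from the sublinear exponents \emph{together with} $\int_0^{\tau_\infty}\vert u(s)\vert_E^2\,ds<\infty$, attributed to (the analogue of) \eqref{eq-locsol_01}. But \eqref{eq-locsol_01} gives finiteness only up to each $\tau_m$, with bounds that need not be uniform in $m$; nothing in Definition \ref{def-local solution} prevents $\int_0^{t\wedge\tau_m}\vert u(s)\vert_E^2\,ds\to\infty$ as $m\to\infty$ even while $\sup_{t<\tau_\infty}\Vert u(t)\Vert_V\le R$. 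Ruling out exactly this scenario is the substance of the theorem (a blow-up criterion in the $V$-norm \emph{alone}), so as written the step is circular. A second, related problem: Assumption \ref{assum-01}(ii) is a bound in $M^2(X_T)$, i.e.\ in mean square over $\Omega$, not a pathwise bound valid on a positive-probability event such as $B_{R,T}$; even granted the integrability, you cannot invoke it directly to get a.s.\ continuity of the stochastic convolution up to $\tau_\infty$ on that event without a cutoff/stopping construction — which again needs control at the endpoint.

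Note that the paper itself gives no proof of this theorem; it defers to \cite[Section 5]{ZB+EH+PR-16}, and the argument there avoids the endpoint extension altogether. Instead of restarting at $\tau_\infty$, one restarts at the approximating times $\tau_m$, where $u(\tau_m)$ is a perfectly well-defined $V$-valued, $\mathcal{F}_{\tau_m}$-measurable random variable with $\Vert u(\tau_m)\Vert_V\le R$ on the bad event. The key input is the \emph{uniform} lower bound on the lifespan in Theorem \ref{thm_local}: for all data with $\mathbb{E}\Vert u_0\Vert^2\le R^2$ one has $\mathbb{P}\big(T_1\ge\tau(\varepsilon,R)\big)\ge 1-\varepsilon$, with $\tau(\varepsilon,R)$ independent of $m$. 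Since $\tau_\infty-\tau_m\to 0$ on $\{\tau_\infty<\infty\}$, for $m$ large the solution restarted at $\tau_m$ (via the shifted Wiener process, exactly as in your splicing step) survives past $\tau_\infty$ on a set of positive probability, contradicting maximality; this simultaneously yields the $\limsup$ assertion. So the repair is local: keep your amalgamation, uniqueness, and splicing machinery, but replace the continuous-extension-at-$\tau_\infty$ step by the $\tau_m$-restart driven by the uniform lifespan estimate.
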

The proofs of both theorems are highly nontrivial and technical, we refer to \cite[Section 5]{ZB+EH+PR-16} for the details.

\medskip
Received for publication July  2018.
\medskip

\end{document}